  \def\one{{\mathbb I}}
    \def\BbbN{\mathbb N}
     \def\bN{\mathbf N}
     \def\bE{\mathbf E}
     \def\bP{\mathbf P}
     \def\bT{\mathbf T}
     \def\bQ{\mathbf Q}
\def\esssup{\operatornamewithlimits{ess\,sup}}
\def\ux{\underline{x}}
\def\uX{\underline{X}}
\def\cF{{\mathcal F}}
\def\cB{{\mathcal B}}
\def\E{{\mathbb E}}
\def\N{{\mathbb N}}
\def\R{{\mathbb R}}
\def\frS{{\mathfrak S}}
\newtheorem{lemma}{Lemma}[section]
\newtheorem{theorem}{Theorem}[section]
\newenvironment{classcode}{%
\small
\begin{center}
  \noindent{\bf AMS Subject Classification{\rm{:}}\ }\ignorespaces
\end{center}
}{%
\par\addvspace{26pt plus 4pt}}
\begin{document}
%
\title{A precision of the sequential change point detection${}^\star$}
\author{Aleksandra Ochman-Gozdek$^{1}$, Wojciech Sarnowski$^{2}$ and Krzysztof J. Szajowski$^{3}$
\thanks{${}^\star$The research has been supported by grant S30103/I-18. This paper was presented in part at $59^{th}$ \emph{ISI World Statistics Congress} 25--30 August 2013, Hong Kong Special Administrative Region, China in the session CPS018 (see \cite{ochsza13:random}), and at \emph{ XXXIX COnference on Mathematical Statistics}  2-6 December 2013, Wisła,Poland.}
\thanks{$^{1}$Aleksandra Ochman--Gozdek is with Toyota Motor Industries Poland Ltd. as HR  Analyst. She prepares her PhD theses at the Institute of Mathematics and Computer Science,
        Wrocław University of Technology; e-mail:
        {\tt\small Aleksandra.Ochman@pwr.wroc.pl}}%
\thanks{$^{2}$Wojciech Sarnowski is with Europejski Fundusz Leasingowy SA as Chief Specialist in Risk Management and Policy Rules Department, pl.~Orląt Lwowskich 1, 53-605 Wroc\l{}aw, Poland; e-mail:
        {\tt\small sarnowsk@interia.pl}}%
\thanks{$^{3}$Krzysztof J. Szajowski is with Faculty of Fundamental Problems of Technology, Institute of Mathematics and Computer Science,
        Wrocław University of Technology, PL-50-370 Wrocław, Poland; e-mail:
        {\tt\small Krzysztof.Szajowski@pwr.wroc.pl}}%
}
\markboth{Journal of \LaTeX\ Class Files,~Vol.~11, No.~4, December~2012}%
{A.~Ochman--Gozdek \MakeLowercase{\textit{et al.}}: A precision of the sequential change point detection	}
%



\maketitle

\begin{abstract}
A random sequence having two segments being the homogeneous Markov processes is registered. Each segment has his own transition probability law and the length of the segment is unknown and random. The transition probabilities of each process are known and \emph{a priori} distribution of the disorder moment is given. The decision maker aim is to detect the moment of the transition probabilities change. The detection of the disorder rarely is precise. The decision maker accepts some deviation in estimation of the disorder moment. In the considered model the aim is to indicate the change point with fixed, bounded error with maximal probability. The case with various precision for over and under estimation of this point is analyzed. The case when the disorder does not appears with positive probability is also included.  The results insignificantly extends range of application, explain the structure of optimal detector in various circumstances and shows new details of the solution construction. The motivation for this investigation is the modelling of the attacks in the node of networks.  The objectives is to detect one of the attack immediately or in very short time before or after it appearance with highest probability. The problem is reformulated to optimal stopping of the observed sequences. The detailed analysis of the problem is presented to show the form of optimal decision function.
\end{abstract}

\begin{IEEEkeywords}
Bayesian approach, disorder problem, sequential detection, optimal stopping, Markov process, change point
\end{IEEEkeywords}
\begin{classcode}
\emph{Primary: }60G40; \emph{Secondary: }{60K99; 90D60}
\end{classcode}
%
\IEEEpeerreviewmaketitle

%
%
%
%
\section{Introduction}
\IEEEPARstart{S}{uppose} that the process $X=\{X_n,n\in\BbbN\}$, $\BbbN=\{0,1,2,\ldots\}$, is observed sequentially. It is obtained from Markov processes by switching between them at a random moment $\theta$ in such a way that the process after $\theta$ starts from the state $X_{\theta-1}$. It means that the state at moment $n\in \BbbN$ has conditional distribution given the state at moment $n-1$, where the formulae describing these distributions have the different form: one for $n<\theta$ and another for $n\geq \theta$. Our objective is to detect the moment $\theta$ based on observation of $X$.  There are some papers devoted to the discrete case of such disorder detection which generalize in various directions the basic problem stated by Shiryaev~in~\cite{shi61:detection} (see e.g. Brodsky and Darkhovsky~\cite{brodar93:nonparametr}, Bojdecki~\cite{boj79:dis}, Bojdecki and Hosza~\cite{bojhos84:problem}, Yoshida~\cite{yos83:complicated}, Szajowski~\cite{sza92:detection,sza96:twodis}).

Such model of data appears in many practical problems of the quality control (see Brodsky and Darkhovsky~\cite{brodar93:nonparametr}, Shewhart~\cite{she31:quality} and in the collection of the papers \cite{basben86:abrupt}), traffic anomalies in networks (in papers Dube and Mazumdar~\cite{dubmaz01:quickest}, Tartakovsky et al.~\cite{tarroz06:intrusions}), epidemiology models (see Baron~\cite{bar04:epidemio}, Siegmund~\cite{Sie13:biology}). The aim is to recognize the moment of the change over the one probabilistic characteristics to another of the phenomenon.

Typically, the disorder problem is limited to the case of switching between sequences of independent random variables (see Bojdecki \cite{boj79:dis}). Some developments of the basic model can be found in the paper by Yakir~\cite{yak94:finite} where the optimal detection rule of the switching moment has been obtained when the finite state-space Markov chains is disordered. Moustakides~\cite{mou98:abrupt} formulates conditions which help to reduce the problem of the quickest detection for dependent sequences before and after the change to the case for independent random variables. Our result is a generalization of the results obtained by Bojdecki~\cite{boj79:dis} and Sarnowski et al.~\cite{SarSza11:transition}. It admits Markovian dependence structure for switched sequences (with possibly uncountable state-space). We obtain an optimal rule under probability maximizing criterion.

Formulation of the problem can be found in Section \ref{sformProblem}. The main result is presented in Section \ref{rozwProblem}.

\section{Formulation of the problem}\label{sformProblem}
Let $(\Omega,\mathcal{F}, \bP)$ be a probability space which supports sequence of observable random variables $\{X_n\}_{n \in \N}$ generating filtration $\mathcal{F}_n = \sigma(X_0,X_1,...,X_n)$. Random variables $X_n$ take values in $(\E, \mathcal{B})$, where $\E$ is a subset of $\R$. Space $(\Omega,\mathcal{F}, \bP)$ supports also unobservable (hence not measurable with respect to $\cF_n$) random variable $\theta$ which has the geometric  distribution:
\begin{eqnarray}
\label{rozkladyTeta}
\bP(\theta = j) = \pi\one_{\{j=0\}}+(1-\pi)p^{j-1}q\one_{\{j\geq 1\}},
\end{eqnarray}
where $q=1-p,\pi \in (0,1)$, $j=1,2,\ldots$.

We introduce in $(\Omega,\mathcal{F}, \bP)$ also two time homogeneous and independent Markov processes $\{X_n^0\}_{n \in \N}$ and $\{X_n^1\}_{n \in \N}$ taking values in $(\E, \mathcal{B})$ and assumed to be independent of $\theta$. Moreover, it is assumed that $\{X_n^0\}_{n \in \N}$ and $\{X_n^1\}_{n \in \N}$ have transition densities with respect to a $\sigma$-finite measure $\mu$, i.e., for $i = 0, 1$ and $B\in\cB$
\begin{eqnarray}\label{TransProbab}
\bP_x^{i}(X_1^{i}\in B)&=&\bP(X_1^{i}\in B|X_0^{i}=x)=\int_Bf_x^{i}(y)\mu(dy)=\int_B\mu_x(dy).
\end{eqnarray}
Random processes $\{X_n\}$, $\{X_n^0\}$, $\{X_n^1\}$ and random variable $\theta$ are connected via the rule: conditionally on $\theta = k$
\begin{equation*}
X_n=\left\{\begin{array}{ll}
X_n^0,&\text{ if $k>n$,}\\
X_{n+1-k}^1,&\text{ if $k\leq n$,}
\end{array}
\right.
\end{equation*}
where $\{X_n^1\}$ is started from $X_{k-1}^0$ (but is otherwise independent of $X^0$).

Let us introduce the following notation:
\begin{eqnarray*}
\underline{x}_{k,n} &=& (x_k, x_{k+1},\ldots,x_{n-1},x_n),\; k\leq n, \nonumber\\
L_{m}(\underline{x}_{k,n}) &=&\displaystyle{ \prod\limits_{r=k+1}^{n-m}} f_{x_{r-1}}^{0}(x_{r})\displaystyle{ \prod\limits_{r=n-m+1}^{n}}f_{x_{r-1}}^{1}(x_{r}), \nonumber\\
\underline{A}_{k,n} &=&  \times_{i=k}^{n} A_{i} = A_k \times A_{k+1} \times \ldots \times A_n,\; A_i \in \cB \nonumber
\end{eqnarray*}
where the convention $\prod_{i=j_1}^{j_2}x_i = 1$ for $j_1 > j_2$ is used.

Let us now define functions $S_\cdot(\cdot)$ and $G_\cdot(\cdot,\cdot)$
\setlength\arraycolsep{0pt}
\begin{eqnarray}\label{gestoscLaczna}
    S_n(\ux_{0,n}) &=& \pi L_n(\ux_{0,n})+\bar{\pi}\left(\sum_{i=1}^n p^{i-1}qL_{n-i+1}(\ux_{0,n}) + p^{n}L_{0}(\ux_{0,n})\right), \\
\label{funkcjaG}
    G_{l+1}(\ux_{n-l-1,n},\alpha)&=& \alpha L_{l+1}(\underline{x}_{n-l-1,n}) + (1-\alpha) \\
    && \times \left( \sum_{i=0}^{l}p^{l-i}q L_{i+1}(\ux_{n-l-1,n}) + p^{l+1}L_0(\ux_{n-l-1,n})\right). \nonumber
 \end{eqnarray}
where $x_0, x_1,\ldots, x_{n} \in \E^{n+1}, \alpha \in [0,1], 0 \leq n-l-1< n$.

The function $S(\ux_{0,n})$ stands for the joint density of the vector $\uX_{0,n}$. For any
$\underline{D}_{0,n}=\{\omega: \uX_{0,n} \in \underline{B}_{0,n}, B_i \in \cB\}$ and any $x\in \E$ we have:
\begin{eqnarray*}
\bP_x(\underline{D}_{0,n})=\bP(\underline{D}_{0,n}|X_0=x)&=&\int_{\underline{B}_{0,n}}S(\ux_{0,n})\mu(d\ux_{0,n})
\end{eqnarray*}
The meaning of the function $G_{n-k+1}(\ux_{k,n},\alpha)$ will be clear in the sequel.

Roughly speaking our model assumes that the process $\{X_n\}$ is obtained by switching at the random and unknown instant $\theta$ between two Markov processes $\{X_n^0\}$ and $\{X_n^1\}$. It means that   the first observation $X_{\theta}$ after the change depends on the previous sample $X_{\theta-1}$ through the transition pdf $f_{X_{\theta-1}}^{1}(X_{\theta})$. For any fixed $d_1,d_2 \in \{0,1,2,\ldots\}$ (the problem $\mathfrak{D}_{d_1d_2}$) we are looking for the stopping time $\tau^{*}\in \mathcal{T}$ such that
\begin{equation}
\label{PojRozregCiagowMark-Problem}
  \bP_x( -d_1\leq \theta - \tau^{*}  \leq d_2 ) = \sup_{\tau \in \frS^X} \bP_x(-d_1\leq \theta - \tau  \leq d_2 )
\end{equation}
where $\frS^X$ denotes the set of all stopping times with respect to the filtration
$\{\mathcal{F}_n\}_{n \in \N}$. Using parameters $d_i$, $i=1,2$, we control the precision level of detection. The problem $\mathfrak{D}_{dd}$, \emph{i.e.} the case $d_1=d_2=d$, when $\pi=0$ has been studied in \cite{SarSza11:transition}.

\section{\label{rozwProblem}Solution of the problem}
Let us denote:
\begin{eqnarray}
Z_n^{(d_1,d_2)} &=& \bP_x(-d_1\leq \theta - n  \leq d_2 \mid \mathcal{F}_n),\; \text{ $n=0,1,2,\ldots$}, \nonumber \\
V_n^{(d_1,d_2)} &=& \esssup_{\{\tau \in \frS^X,\;\tau \geq n\}}\bP_x( -d_1\leq \theta - \tau  \leq d_2 \mid \mathcal{F}_n), \; \text{ $n=0,1,2,\ldots$}, \nonumber \\
\label{stopIntuicyjny}
    \tau_0 &=& \inf\{ n: Z_n^{(d_1,d_2)}=V_n^{(d_1,d_2)} \}
\end{eqnarray}
Notice that, if $Z_{\infty}^{(d_1,d_2)}=0$, then $Z_{\tau}^{(d_1,d_2)} = \bP_x( -d_1\leq \theta - \tau  \leq d_2 \mid \mathcal{F}_{\tau})$ for $\tau \in \frS^X$. Since $\mathcal{F}_{n} \subseteq \mathcal{F}_{\tau}$ (when $n \leq \tau$) we have
\begin{eqnarray*}
    V_n^{(d_1,d_2)} &=& \esssup_{\tau \geq n}\bP_x(-d_1\leq \theta - \tau  \leq d_2 \mid \mathcal{F}_n)
     = \esssup_{\tau \geq n}\bE_x(\one_{\{-d_1\leq \theta - \tau  \leq d_2 \}} \mid \mathcal{F}_n) \nonumber \\
            &=& \esssup_{\tau \geq n}\bE_x(Z_{\tau}^{(d_1,d_2)} \mid \mathcal{F}_n). \nonumber
\end{eqnarray*}
The following lemma (see \cite{boj79:dis}, \cite{SarSza11:transition}) ensures existence of the solution
\begin{lemma}
    The stopping time $\tau_0$ defined by formula (\ref{stopIntuicyjny}) is the solution of problem (\ref{PojRozregCiagowMark-Problem}).
\end{lemma}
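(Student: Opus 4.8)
The plan is to cast problem \eqref{PojRozregCiagowMark-Problem} as a standard optimal stopping problem for the payoff sequence $Z_n^{(d_1,d_2)}$ and then invoke the general theory of optimal stopping (as in Shiryaev, or in the references \cite{boj79:dis}, \cite{SarSza11:transition}) to identify $\tau_0$ as optimal. The starting observation is that
\[
  \sup_{\tau\in\frS^X}\bP_x(-d_1\leq\theta-\tau\leq d_2)
   =\sup_{\tau\in\frS^X}\bE_x Z_\tau^{(d_1,d_2)},
\]
by the tower property, so that the Snell envelope of $\{Z_n^{(d_1,d_2)}\}$ governs the problem, and $V_n^{(d_1,d_2)}$ as defined is exactly that envelope restricted to $\tau\geq n$. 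First I would verify the integrability and the limit behaviour: since $0\leq Z_n^{(d_1,d_2)}\leq 1$ the family $\{Z_\tau^{(d_1,d_2)}\}$ is uniformly integrable, and one checks that $Z_\infty^{(d_1,d_2)}=\lim_n Z_n^{(d_1,d_2)}=0$ a.s. (because $-d_1\leq\theta-n\leq d_2$ fails eventually for any finite $\theta$, and $\theta<\infty$ a.s. under \eqref{rozkladyTeta}). This justifies the identity $Z_\tau^{(d_1,d_2)}=\bP_x(-d_1\leq\theta-\tau\leq d_2\mid\mathcal F_\tau)$ for every, possibly infinite, stopping time, which is the reduction already sketched in the excerpt.

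Next I would establish the three structural facts that make $\tau_0$ optimal. (i) \textbf{Supermartingale property and minimality:} $\{V_n^{(d_1,d_2)}\}$ is the smallest $\mathcal F_n$-supermartingale dominating $\{Z_n^{(d_1,d_2)}\}$; this is the defining property of the Snell envelope and follows from the essential-supremum characterization together with the Bellman equation $V_n^{(d_1,d_2)}=\max\{Z_n^{(d_1,d_2)},\bE_x(V_{n+1}^{(d_1,d_2)}\mid\mathcal F_n)\}$. (ii) \textbf{Finiteness of $\tau_0$:} one must show $\bP_x(\tau_0<\infty)=1$; here the boundedness of the horizon effect enters — because $Z_n^{(d_1,d_2)}\to 0$ while $V_n^{(d_1,d_2)}\geq 0$, the set $\{Z_n^{(d_1,d_2)}=V_n^{(d_1,d_2)}\}$ cannot be avoided forever, since otherwise $V^{(d_1,d_2)}$ would be a nonnegative supermartingale staying strictly above a sequence tending to $0$, forcing $\bE_x V_n^{(d_1,d_2)}\to 0$ and hence $V_n^{(d_1,d_2)}\to 0$, a contradiction on the event of non-stopping. (iii) \textbf{Optimality:} on $\{n\leq\tau_0\}$ the stopped envelope $V_{n\wedge\tau_0}^{(d_1,d_2)}$ is a martingale (the supermartingale loses mass only through the $\bE_x(V_{n+1}^{(d_1,d_2)}\mid\mathcal F_n)$ branch, which is inactive before $\tau_0$), so by optional sampling and uniform integrability $\bE_x Z_{\tau_0}^{(d_1,d_2)}=\bE_x V_{\tau_0}^{(d_1,d_2)}=\bE_x V_0^{(d_1,d_2)}=\sup_\tau\bE_x Z_\tau^{(d_1,d_2)}$, which is precisely \eqref{PojRozregCiagowMark-Problem}.

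Since all of the ingredients above are the classical optimal-stopping facts already used in \cite{boj79:dis} and \cite{SarSza11:transition}, the proof is essentially a citation with the two verifications that are specific to the present model: that $Z_\infty^{(d_1,d_2)}=0$ (so the infinite-horizon reward vanishes) and that $\tau_0$ is a.s. finite. The main obstacle, and the only place where some care is genuinely needed, is step (ii): ruling out $\bP_x(\tau_0=\infty)>0$. The argument via "a nonnegative supermartingale dominating a null sequence must itself converge to $0$" works, but one should present it cleanly — either through the supermartingale convergence theorem applied to $V_n^{(d_1,d_2)}$ and the sandwich $Z_n^{(d_1,d_2)}\leq V_n^{(d_1,d_2)}$, noting $V_\infty^{(d_1,d_2)}\geq\limsup Z_n^{(d_1,d_2)}\vee(\text{something positive on }\{\tau_0=\infty\})$ leads to a contradiction with $\bE_x V_\infty^{(d_1,d_2)}\leq\bE_x V_0^{(d_1,d_2)}<\infty$ combined with $V_n^{(d_1,d_2)}>Z_n^{(d_1,d_2)}$ there — or, more directly, by exhibiting for each $n$ a stopping time $\tau\geq n$ with $\bE_x(Z_\tau^{(d_1,d_2)}\mid\mathcal F_n)>0$ and comparing. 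I would use whichever of these the cited lemmas already phrase, to keep the exposition short.
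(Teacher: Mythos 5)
Your overall route is the same as the paper's: both reduce the problem to optimal stopping of the bounded payoff sequence $Z_n^{(d_1,d_2)}$ and invoke the standard Snell-envelope results (the paper simply cites the theorems of \cite{boj79:dis} for items (i)--(iii) of your plan rather than re-deriving them). The only model-specific fact either proof must supply is that $Z_n^{(d_1,d_2)}\to 0$ a.s., and this is exactly the point where your justification is too quick. The reason you give --- that the event $\{-d_1\le\theta-n\le d_2\}$ fails for all large $n$ since $\theta<\infty$ a.s. --- shows that the indicators $\one_{\{-d_1\le\theta-n\le d_2\}}$ tend to $0$, but $Z_n^{(d_1,d_2)}$ is the conditional probability of this event given $\mathcal F_n$, and $\theta$ is not $\mathcal F_n$-measurable, so the vanishing of the indicators does not by itself transfer to the conditional expectations (the conditioning $\sigma$-field changes with $n$ at the same time as the event does). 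The paper's entire proof is devoted to closing precisely this gap: bound $Z_n^{(d_1,d_2)}\le\bE_x\bigl(\sup_{j\ge k}\one_{\{-d_1\le\theta-j\le d_2\}}\mid\mathcal F_n\bigr)$ for $n\ge k$, apply L\'evy's upward theorem to pass to $\mathcal F_\infty$ as $n\to\infty$, and then let $k\to\infty$ by dominated convergence, since $\sup_{j\ge k}\one_{\{-d_1\le\theta-j\le d_2\}}\downarrow 0$ a.s. With that substitution your argument is complete; the remaining structural steps (supermartingale property, a.s. finiteness of $\tau_0$, optional sampling) are exactly what the cited theorems of Bojdecki deliver once $Z_n^{(d_1,d_2)}\to 0$ is established.
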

\begin{proof}
From the theorems presented in \cite{boj79:dis} it is enough to show that
$\displaystyle{\lim_{n \rightarrow \infty}Z_n^{(d_1,d_2)}=0}$. For all natural numbers $n,k$, where $n\geq k$ and $s,t\in\bar{\bN}$ we have:
\begin{eqnarray*}
Z_n^{(d_1,d_2)} &=& \bE_x(\one_{\{ -d_1\leq \theta - n  \leq d_2 \}} \mid \mathcal{F}_n) \leq \bE_x(\sup_{j \geq k}\one_{\{ -d_1\leq \theta - j \leq d_2 \}} \mid \mathcal{F}_n)
\end{eqnarray*}
From Levy's theorem
$\limsup_{n\rightarrow \infty}Z_n^{(d_1,d_2)} \leq \bE_x(\sup_{j \geq k}\one_{\{ -d_1\leq\theta - j \leq d_2 \}} \mid \mathcal{F}_{\infty})$
where $\mathcal{F}_{\infty} = \sigma\left( \bigcup_{n=1}^{\infty}\mathcal{F}_n \right)$.
It is true that: $\limsup_{j \geq k,\; k\rightarrow \infty}\one_{\{ -d_1\leq\theta - j \leq d_2 \}} = 0$ \emph{a.s.} and by the dominated convergence theorem we get
\[
  \lim_{k \rightarrow \infty}\bE_x(\sup_{j\geq k}\one_{\{ -d_1\leq\theta - j  \leq d_2 \}} \mid \mathcal{F}_{\infty} ) = 0 \text{ a.s.}.
\]
The proof of the lemma is complete.
\end{proof}
By the following lemma (see also Lemma~3.2 in \cite{SarSza11:transition}) we can limit the class of possible stopping rules to $\frS^X_{d_2+1}$ i.e. stopping times equal at least $d_2+1$. Then rule $\tilde{\tau} = \max(\tau,d_1+1)$
is at least as good as $\tau$.
\begin{lemma}\label{PojRozregCiagowMark-lematCzasStopuMax}
Let $\tau$ be a stopping rule in the problem (\ref{PojRozregCiagowMark-Problem}). Then rule $\tilde{\tau} = \max(\tau,d_1+1)$
is at least as good as $\tau$. 
\end{lemma}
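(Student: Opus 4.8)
The plan is to replace the detection probability by the conditional payoff $\bE_x Z^{(d_1,d_2)}_{\cdot}$ and then to use that $Z^{(d_1,d_2)}$ behaves as a submartingale during the first $d_1$ instants. By the proof of the preceding lemma $Z^{(d_1,d_2)}_\infty=0$, so $\bP_x(-d_1\le\theta-\tau\le d_2)=\bE_x Z^{(d_1,d_2)}_\tau$ for every $\tau\in\frS^X$, and it suffices to show $\bE_x Z^{(d_1,d_2)}_{\tilde\tau}\ge\bE_x Z^{(d_1,d_2)}_\tau$. Since $\tilde\tau=\tau$ on $\{\tau\ge d_1+1\}$ while $\tilde\tau=d_1+1$ on $\{\tau\le d_1\}$, and $\{\tau\le d_1\}\in\mathcal{F}_{d_1}\subseteq\mathcal{F}_{d_1+1}$, this reduces the claim to
\[
\bE_x\!\bigl[\bigl(Z^{(d_1,d_2)}_{d_1+1}-Z^{(d_1,d_2)}_{\tau}\bigr)\one_{\{\tau\le d_1\}}\bigr]\ \ge\ 0 .
\]

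First I would record the elementary remark that for $n\le d_1$ the lower constraint is vacuous: since $\theta\ge0\ge n-d_1$, the event $\{-d_1\le\theta-n\le d_2\}$ coincides with $\{\theta\le n+d_2\}$, hence $Z^{(d_1,d_2)}_n=\bP_x(\theta\le n+d_2\mid\mathcal{F}_n)$ for $n=0,\dots,d_1$. From this one gets, for every $n$,
\[
\bE_x\!\left[Z^{(d_1,d_2)}_{n+1}\mid\mathcal{F}_n\right]-Z^{(d_1,d_2)}_n=\bP_x\!\left(\theta=n+d_2+1\mid\mathcal{F}_n\right)-\bP_x\!\left(\theta=n-d_1\mid\mathcal{F}_n\right),
\]
and for $n\le d_1-1$ the second term vanishes, so $(Z^{(d_1,d_2)}_n)_{n=0}^{d_1}$ is a submartingale; by the optional sampling theorem applied to the bounded time $\min(\tau,d_1)$ this already yields $\bE_x Z^{(d_1,d_2)}_{\min(\tau,d_1)}\le\bE_x Z^{(d_1,d_2)}_{d_1}$, i.e.\ $\max(\tau,d_1)$ is at least as good as $\tau$. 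It therefore remains only to compare the instants $d_1$ and $d_1+1$: using $\{\tau\le d_1\}\in\mathcal{F}_{d_1}$ together with $Z^{(d_1,d_2)}_{d_1}=\bP_x(0\le\theta\le d_1+d_2\mid\mathcal{F}_{d_1})$ and $\bE_x[Z^{(d_1,d_2)}_{d_1+1}\mid\mathcal{F}_{d_1}]=\bP_x(1\le\theta\le d_1+d_2+1\mid\mathcal{F}_{d_1})$, the remaining inequality becomes
\[
\bE_x\!\bigl[\bigl(\bP_x(\theta=d_1+d_2+1\mid\mathcal{F}_{d_1})-\bP_x(\theta=0\mid\mathcal{F}_{d_1})\bigr)\one_{\{\tau\le d_1\}}\bigr]\ \ge\ 0 .
\]

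The last step is where I expect the genuine difficulty: unlike the earlier increments, $\bP_x(\theta=d_1+d_2+1\mid\mathcal{F}_{d_1})-\bP_x(\theta=0\mid\mathcal{F}_{d_1})$ need not be nonnegative, because the atom $\{\theta=0\}$ carries positive posterior mass whenever $\pi>0$. If $\pi=0$ this mass vanishes, the increment is $\ge0$, and one recovers the statements of \cite{boj79:dis} and \cite{SarSza11:transition}. For $\pi>0$ I would establish the required domination of $\bP_x(\theta=0\mid\mathcal{F}_{d_1})$ on $\{\tau\le d_1\}$ by expressing both conditional probabilities through the prior weights $\pi$ and $(1-\pi)p^{\,j-1}q$ and the likelihoods $L_m(\cdot)$ --- that is, through the joint density $S_n$ and the functions $G_{l+1}$ of (\ref{gestoscLaczna})--(\ref{funkcjaG}), which are exactly the objects carrying these posteriors --- and then comparing, along $\{\tau\le d_1\}$, the posterior mass at $\theta=0$ with the posterior mass at $\theta=d_1+d_2+1$ (equivalently, when the comparison is localized on $\{\tau=k\}$, with the mass on the shifted window $\{k+d_2+1,\dots,d_1+d_2+1\}$). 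Inserting this inequality into the displays above gives $\bE_x Z^{(d_1,d_2)}_{\tilde\tau}\ge\bE_x Z^{(d_1,d_2)}_\tau$, i.e.\ $\tilde\tau$ is at least as good as $\tau$.
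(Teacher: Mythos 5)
Your argument is not complete, and the gap is exactly where you flag it: everything funnels into the final inequality
\[
\bE_x\!\bigl[\bigl(\bP_x(\theta=d_1+d_2+1\mid\cF_{d_1})-\bP_x(\theta=0\mid\cF_{d_1})\bigr)\one_{\{\tau\le d_1\}}\bigr]\ \ge\ 0,
\]
which you only announce you ``would establish''. It cannot be established from the model as written: with the prior (\ref{rozkladyTeta}) and $\pi\in(0,1)$, take $d_1=d_2=0$ and $\pi$ close to $1$; then $\tau\equiv 0$ achieves $\bP_x(\theta=\tau)=\Pi_0=\pi$, while $\tilde\tau\equiv 1$ achieves $\bP_x(\theta=1)\le\bP_x(\theta\ge1)=1-\pi$, so $\tilde\tau$ is strictly worse and the domination of the posterior atom at $\theta=0$ you need simply fails. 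What you have correctly isolated is a genuine tension between the lemma and the prior, which puts mass $\pi>0$ on $\{\theta=0\}$: on $\{\tau\le d_1\}$ the window $[\tau-d_1,\tau+d_2]$ captures that atom, whereas $[\tilde\tau-d_1,\tilde\tau+d_2]=[1,d_1+d_2+1]$ does not. The paper closes this by fiat, invoking ``$\bP_x(\theta\ge1)=1$'' in its proof --- i.e., by not treating $\theta=0$ as a detectable change point; under that convention your last increment is nonnegative and your argument goes through, as you note for the case $\pi=0$. Without some such convention, no amount of further work on your last step will succeed.

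Methodologically you also take a much heavier route than the paper. The paper's proof is a two-line pathwise inclusion of events: on $\{\tau\le d_1\}$ one has $\tau-d_1\le 0$, hence (granting $\theta\ge1$ a.s.)
\[
\{\tau-d_1\le\theta\le\tau+d_2\}=\{1\le\theta\le\tau+d_2\}\subseteq\{1\le\theta\le d_1+d_2+1\}=\{\tilde\tau-d_1\le\theta\le\tilde\tau+d_2\},
\]
and taking $\bP_x$, together with $\tilde\tau=\tau$ on $\{\tau\ge d_1+1\}$, finishes the proof --- no reduction to $\bE_x Z^{(d_1,d_2)}_\tau$, no submartingale property, no optional sampling. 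Your first two displays (the submartingale structure of $(Z^{(d_1,d_2)}_n)_{n\le d_1}$ and optional sampling at $\min(\tau,d_1)$) are correct but reprove, with machinery, what is at bottom a monotone inclusion of windows. I would replace them by the event inclusion and confine the discussion to the one point that actually matters, namely the atom at $\theta=0$ and the convention under which it is excluded.
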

\begin{proof} For $\tau \geq d_1+1$ the rules $\tau, \tilde{\tau}$ are the same. Let us consider the case when $\tau < d_1+1$. We have $\tilde{\tau} = d_1+1$ and based on the fact that $\bP_{x}(\theta \geq 1)=1$ we get:
\begin{eqnarray}
\bP_{x}(-d_1\leq \theta - \tau \leq d_2) &=& \bP_{x}(\tau - d_1  \leq \theta \leq \tau + d_2)                = \bP_{x}( 1 \leq \theta \leq \tau + d_2)                         \nonumber \\
                          &\leq& \bP_{x}( 1 \leq \theta \leq d_2+d_1+1)= \bP_{x}(\tilde{\tau} - d_1  \leq \theta \leq \tilde{\tau} + d_2)  \nonumber \\
                          &=& \bP_{x}(-d_1\leq\theta - \tilde{\tau} \leq d_2).                      \nonumber
\end{eqnarray}
This is the desired conclusion.
\end{proof}

For further considerations let us define the posterior process:
\begin{eqnarray}
    \Pi_0 &=& \pi,\nonumber\\
    \Pi_n &=& \bP_x\left(\theta \leq n \mid \mathcal{F}_n\right),\; n = 1, 2, \ldots  \nonumber
\end{eqnarray}
which is designed as information about the distribution of the disorder instant $\theta$. Next lemma transforms the payoff function to the more convenient form.
\begin{lemma}\label{ZmianaWyplaty}
    Let
\begin{eqnarray}\label{NowaWyplata}
h(\underline{x}_{0,d_1+1},\alpha) = \left( 1-p^{d_2}+ q\sum_{m=0}^{{d_1}}\frac{L_{m+1}(\underline{x}_{0,{d_1}+1}) } { p^m L_{0}(\underline{x}_{0,d_1+1})} \right) (1-\alpha),
\end{eqnarray}
where $x_0,\ldots,x_{d_1+1} \in \E$, $\alpha \in (0,1)$, then
\begin{eqnarray*}
\bP_x(-d_1\leq \theta - n  \leq d_2) = \bE_x\left[ h(\underline{X}_{n-1-d_1,n},\Pi_n) \right].
\end{eqnarray*}
\end{lemma}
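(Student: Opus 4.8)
The plan is to prove the stronger pointwise identity $Z_n^{(d_1,d_2)}=h(\uX_{n-1-d_1,n},\Pi_n)$ and then to take $\bE_x$ of both sides, using $\bP_x(-d_1\le\theta-n\le d_2)=\bE_x[Z_n^{(d_1,d_2)}]$. Writing $Z_n^{(d_1,d_2)}=\bP_x(n-d_1\le\theta\le n+d_2\mid\mathcal{F}_n)=\sum_{j=n-d_1}^{n+d_2}\bP_x(\theta=j\mid\mathcal{F}_n)$, I would split the sum at $j=n$ into a \emph{future} block $j\in\{n+1,\dots,n+d_2\}$ and a \emph{past/present} block $j\in\{n-d_1,\dots,n\}$. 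The window $\uX_{n-1-d_1,n}$ makes sense only for $n\ge d_1+1$, which by Lemma~\ref{PojRozregCiagowMark-lematCzasStopuMax} is the only range we need; it also forces $j\ge1$ throughout the second block, so that \eqref{rozkladyTeta} may be used there in the form $\bP(\theta=j)=(1-\pi)p^{j-1}q$.

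For the future block I would use that on $\{\theta>n\}$ the sample $X_0,\dots,X_n$ coincides with $X^0_0,\dots,X^0_n$ and is thus, conditionally on $\{\theta>n\}$, independent of the exact value of $\theta$; combined with the independence of $\theta$ from $(X^0,X^1)$ and the lack of memory of \eqref{rozkladyTeta} this gives, for $j>n$, $\bP_x(\theta=j\mid\mathcal{F}_n)=\bP(\theta=j\mid\theta>n)\,(1-\Pi_n)=p^{\,j-n-1}q\,(1-\Pi_n)$, and summing the finite geometric series over $j=n+1,\dots,n+d_2$ produces the term $(1-p^{d_2})(1-\Pi_n)$.

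For the past/present block I would invoke Bayes' formula with the joint density \eqref{gestoscLaczna}: for $1\le j\le n$, $\bP_x(\theta=j\mid\mathcal{F}_n)=\bP(\theta=j)\,L_{n-j+1}(\uX_{0,n})/S_n(\uX_{0,n})$, and the same identity applied to the event $\{\theta>n\}$ gives $1-\Pi_n=(1-\pi)p^{n}L_0(\uX_{0,n})/S_n(\uX_{0,n})$. Dividing one by the other cancels the intractable normaliser $S_n(\uX_{0,n})$ and leaves $\bP_x(\theta=j\mid\mathcal{F}_n)=\frac{\bP(\theta=j)}{(1-\pi)p^{n}}\cdot\frac{L_{n-j+1}(\uX_{0,n})}{L_0(\uX_{0,n})}\,(1-\Pi_n)$. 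The crucial observation is that $L_{n-j+1}(\uX_{0,n})/L_0(\uX_{0,n})=\prod_{r=j}^{n}f_{X_{r-1}}^{1}(X_r)/f_{X_{r-1}}^{0}(X_r)$ depends only on $X_{j-1},\dots,X_n$, so for $j=n-m$ it equals $L_{m+1}(\uX_{n-1-d_1,n})/L_0(\uX_{n-1-d_1,n})$; substituting this together with $\bP(\theta=n-m)=(1-\pi)p^{\,n-m-1}q$ and summing over $m=0,\dots,d_1$ reproduces, after the elementary simplification of the powers of $p$, exactly the $q\sum_{m=0}^{d_1}L_{m+1}(\cdot)/\big(p^{m}L_0(\cdot)\big)$ term of \eqref{NowaWyplata} times $(1-\Pi_n)$. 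Adding the two blocks yields $Z_n^{(d_1,d_2)}=h(\uX_{n-1-d_1,n},\Pi_n)$, and taking $\bE_x$ completes the proof.

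I expect the past/present block to be the main obstacle: one has to run the Bayes computation on the general measurable space $(\E,\cB)$ purely in terms of the densities $S_n$ and $L_m$ (conditional probabilities of the relevant null events are not directly available), then recognise that the likelihood ratio localises to the last $m+1$ observations so that the whole dependence on the past collapses onto the finite window $\uX_{n-1-d_1,n}$, and finally keep the powers of $p$ coming from the geometric tail exact when reindexing $\sum_{j=n-d_1}^{n}$ as $\sum_{m=0}^{d_1}$. By contrast the future block, the closed forms of the finite geometric sums, and the remark that each $L_m(\ux_{0,n})$ is a product of transition densities and hence integrates to $1$ against $\mu$ (which also gives a one-line verification of the identity directly in expectation) are routine.
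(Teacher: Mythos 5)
Your overall strategy is sound and is genuinely different from the paper's: you compute $\bP_x(\theta=j\mid\cF_n)$ term by term via Bayes' formula against the joint density $S_n$ and sum over the window, whereas the paper works with the cumulative quantities $\bP_x(\theta\le n+d_2\mid\cF_n)$ and $\bP_x(\theta\le n-d_1-1\mid\cF_n)$ through its appendix lemmata (the tail formula $1-p^k(1-\Pi_n)$, the factorization $S(\uX_{0,n})=S(\uX_{0,n-l-1})G(\uX_{n-l-1,n},\Pi_{n-l-1})$, and the recursion for $\Pi_n$ in terms of $\Pi_{n-l-1}$). Your route is more elementary and self-contained; the paper's buys reusable identities (the same appendix lemmata drive Lemma~\ref{FunkcjaMarkowska} and the operator computations). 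Your treatment of the future block, of the restriction to $n\ge d_1+1$ so that $j\ge 1$, and of the localisation $L_{n-j+1}(\uX_{0,n})/L_0(\uX_{0,n})=\prod_{r=j}^{n}f^1_{X_{r-1}}(X_r)/f^0_{X_{r-1}}(X_r)$ onto the window are all correct.

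The gap is in your last step. Your own formulas give, for $j=n-m$ with $0\le m\le d_1$,
\begin{equation*}
\frac{\bP_x(\theta=n-m\mid\cF_n)}{1-\Pi_n}
=\frac{\bar{\pi}p^{\,n-m-1}q}{\bar{\pi}p^{\,n}}\cdot\frac{L_{m+1}(\uX_{n-d_1-1,n})}{L_{0}(\uX_{n-d_1-1,n})}
=\frac{q}{p^{\,m+1}}\cdot\frac{L_{m+1}(\uX_{n-d_1-1,n})}{L_{0}(\uX_{n-d_1-1,n})},
\end{equation*}
so the past/present block sums to $q\sum_{m=0}^{d_1}L_{m+1}/(p^{\,m+1}L_0)\cdot(1-\Pi_n)$, \emph{not} to the term $q\sum_{m=0}^{d_1}L_{m+1}/(p^{\,m}L_0)$ appearing in (\ref{NowaWyplata}); the two differ by a factor $p$, and no ``elementary simplification of the powers of $p$'' removes it. A sanity check with $d_1=d_2=0$ confirms your algebra is the forced one: $\bP_x(\theta=n\mid\cF_n)=\bar{\pi}p^{n-1}qL_1/S_n=(q/p)\,(L_1/L_0)\,(1-\Pi_n)$, whereas (\ref{NowaWyplata}) would give $q\,(L_1/L_0)\,(1-\Pi_n)$. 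So the defect is not in your derivation but in your claim that it ``reproduces exactly'' the stated payoff: you have silently reconciled your (correct) answer with a target it contradicts. Note that the source itself is internally inconsistent here --- the lemma's statement has $L_{m+1}/p^m$, the displayed identity inside its proof has $L_m/p^m$, and the appendix lemmata force $L_{m+1}/p^{m+1}$ --- so a complete write-up must either state and prove the corrected payoff $h(\ux_{0,d_1+1},\alpha)=\bigl(1-p^{d_2}+q\sum_{m=0}^{d_1}L_{m+1}(\ux_{0,d_1+1})/(p^{m+1}L_0(\ux_{0,d_1+1}))\bigr)(1-\alpha)$, or explicitly flag the index slip; asserting agreement with (\ref{NowaWyplata}) as written is an error.
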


\begin{proof}
We rewrite the initial criterion as the expectation
\begin{eqnarray*}
\bP_x(-d_1\leq \theta - n  \leq d_2) &=& \bE_x\left[ \bP_x(-d_1\leq \theta - n  \leq d_2 \mid \mathcal{F}_n ) \right] \nonumber\\
&=& \bE_x\left[ \bP_x( \theta \leq n+d_2 \mid \mathcal{F}_n ) - \bP_x( \theta \leq n-d_1-1 \mid \mathcal{F}_n ) \right]
\end{eqnarray*}
The probabilities under the expectation can be transformed to the convenient form using the lemmata~\ref{PojRozregCiagowMark-dodatek-lemat-pin d krokow naprzod} and \ref{PojRozregCiagowMark-dodatek-lemat-pin d-1 krokow wstecz} ( A1 and A4 of \cite{SarSza11:transition}). Next, with the help of Lemma~A5 (\emph{ibid}) (putting $l=d_1$) we can express $\bP_x( \theta \leq n+d_2 \mid \mathcal{F}_n )$ in terms of $\Pi_{n}$. Straightforward calculations imply that:
\begin{eqnarray*}
\bP_{x}( -d_1\leq\theta - n \leq d_2 \mid \cF_n ) = \left( 1-p^{d_2}+q\sum_{m=0}^{d_1} \frac{L_{m}(\underline{X}_{n-d_1-1,n})}{p^m L_{0}(\underline{X}_{n-d_1-1,n})} \right)(1-\Pi_n). 
\end{eqnarray*}
This proves the lemma.
\end{proof}

\begin{lemma}\label{FunkcjaMarkowska}
    The process $\{\eta_n\}_{n \geq d_1+1}$, where $\eta_n=(\underline{X}_{n-d_1-1,n},\Pi_n)$, forms a random Markov function.
\end{lemma}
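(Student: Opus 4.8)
The plan is to show that $\eta_{n+1}$ is a fixed (time-independent) measurable function of the pair $(\eta_n,X_{n+1})$, and that the conditional law of $X_{n+1}$ given $\mathcal{F}_n$ depends on $\mathcal{F}_n$ only through $\eta_n$. Since $\eta_n$ is $\mathcal{F}_n$-measurable and $\sigma(\eta_{d_1+1},\ldots,\eta_n)\subseteq\mathcal{F}_n$, the Markov property of $\{\eta_n\}_{n\ge d_1+1}$, with one and the same transition kernel for every $n$, then follows by the usual conditioning (tower) argument; this is what the statement asserts.

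First I would record the one-step filtering recursion for the posterior process. Splitting on the events $\{\theta\le n\}$, $\{\theta=n+1\}$, $\{\theta>n+1\}$ and using the geometric prior (\ref{rozkladyTeta}) together with the gluing construction of $\{X_n\}$, one gets $\bP_x(\theta\le n+1\mid\mathcal{F}_n)=\Pi_n+q(1-\Pi_n)$ and $\bP_x(\theta>n+1\mid\mathcal{F}_n)=p(1-\Pi_n)$, and then, by Bayes' rule, there is a measurable map $\Psi\colon[0,1]\times\E\times\E\to[0,1]$ with
\[
\Pi_{n+1}=\Psi(\Pi_n,X_n,X_{n+1})
=\frac{\bigl(\Pi_n+q(1-\Pi_n)\bigr)f_{X_n}^{1}(X_{n+1})}
{\bigl(\Pi_n+q(1-\Pi_n)\bigr)f_{X_n}^{1}(X_{n+1})+p(1-\Pi_n)f_{X_n}^{0}(X_{n+1})}.
\]
This is the content of the posterior recursions of \cite{SarSza11:transition} (the lemmata A1--A5 already invoked in Lemma~\ref{ZmianaWyplaty}), and $\Psi$ plainly does not depend on $n$. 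The window part evolves deterministically: $\uX_{n-d_1,n+1}$ is obtained from $\uX_{n-d_1-1,n}$ by deleting the first coordinate and appending $X_{n+1}$. Hence, denoting a generic window state by $\ux_{0,d_1+1}=(x_0,\ldots,x_{d_1+1})$, we have $\eta_{n+1}=\phi(\eta_n,X_{n+1})$ with
\[
\phi\bigl((\ux_{0,d_1+1},\alpha),y\bigr)=\bigl((x_1,\ldots,x_{d_1+1},y),\,\Psi(\alpha,x_{d_1+1},y)\bigr),
\]
a measurable map independent of $n$ (all of $\Pi_n$, $X_n$, $X_{n+1}$ and $X_{n-d_1},\ldots,X_n$ are encoded in $(\eta_n,X_{n+1})$).

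Next I would identify the conditional law of $X_{n+1}$ given $\mathcal{F}_n$. On $\{\theta\le n+1\}$ the observation $X_{n+1}$ is governed by $f_{X_n}^{1}(\cdot)$ (the post-change chain being started from $X_{\theta-1}$, so only the last state $X_n$ matters), and on $\{\theta>n+1\}$ it is governed by $f_{X_n}^{0}(\cdot)$; therefore, for $B\in\cB$,
\[
\bP_x\bigl(X_{n+1}\in B\mid\mathcal{F}_n\bigr)
=\int_B\Bigl[\bigl(\Pi_n+q(1-\Pi_n)\bigr)f_{X_n}^{1}(y)+p(1-\Pi_n)f_{X_n}^{0}(y)\Bigr]\mu(dy),
\]
which depends on $\mathcal{F}_n$ only through $(X_n,\Pi_n)$, hence through $\eta_n$, and which is literally the same for all $n\ge d_1+1$. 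Combining the two steps, for every bounded measurable $g$ on $\E^{d_1+2}\times[0,1]$,
\[
\bE_x\bigl[g(\eta_{n+1})\mid\mathcal{F}_n\bigr]
=\int_\E g\bigl(\phi(\eta_n,y)\bigr)\Bigl[\bigl(\Pi_n+q(1-\Pi_n)\bigr)f_{X_n}^{1}(y)+p(1-\Pi_n)f_{X_n}^{0}(y)\Bigr]\mu(dy)=:(\bT g)(\eta_n),
\]
a fixed measurable function of $\eta_n$ independent of $n$. Conditioning once more on $\sigma(\eta_{d_1+1},\ldots,\eta_n)\subseteq\mathcal{F}_n$ gives $\bE_x[g(\eta_{n+1})\mid\eta_{d_1+1},\ldots,\eta_n]=(\bT g)(\eta_n)$, which is precisely the assertion that $\{\eta_n\}_{n\ge d_1+1}$ is a homogeneous Markov process (a random Markov function) with transition operator $\bT$.

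I expect the only genuinely delicate point to be the rigorous proof of the displayed conditional law of $X_{n+1}$ — equivalently, of the two identities for $\bP_x(\theta\le n+1\mid\mathcal{F}_n)$ and $\bP_x(\theta>n+1\mid\mathcal{F}_n)$. One must verify, starting from the explicit joint density $S(\ux_{0,n})$ of (\ref{gestoscLaczna}) and the construction gluing $\{X_n^0\}$ and $\{X_n^1\}$ at $\theta$, that conditioning on the entire trajectory $\mathcal{F}_n$ really collapses to conditioning on the finite statistic $(X_n,\Pi_n)$; once the filtering identities of \cite{SarSza11:transition} are in hand, the remainder of the argument is routine bookkeeping.
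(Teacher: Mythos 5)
Your proposal is correct and follows essentially the same route as the paper: you verify the two hypotheses of the standard "random Markov function" criterion (that $\eta_{n+1}$ is a fixed measurable function of $(\eta_n,X_{n+1})$ via the one-step posterior recursion $\Pi_{n+1}=f_{X_n}^1(X_{n+1})(q+p\Pi_n)/G(\underline{X}_{n,n+1},\Pi_n)$, and that the conditional law of $X_{n+1}$ given $\mathcal{F}_n$ has the mixture density $p(1-\Pi_n)f^0_{X_n}+(q+p\Pi_n)f^1_{X_n}$ depending only on $(X_n,\Pi_n)$), which is exactly the paper's argument, there justified by Lemma~17 of \cite{shi78:optimal} and the filtering identities of \cite{SarSza11:transition}. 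The "delicate point" you flag at the end is handled in the paper by conditioning through $\hat{\mathcal{F}}_n=\sigma(\theta,\underline{X}_{0,n})$ and invoking Lemma~A1, precisely as you anticipate.
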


\begin{proof}
According to Lemma 17 pp. 102--103 in \cite{shi78:optimal} it is enough to show that $\eta_{n+1}$ is a function of the previous stage $\eta_{n}$, the variable $X_{n+1}$ and that conditional distribution of $X_{n+1}$ given $\mathcal{F}_n$ is a function of $\eta_n$. Let us consider, for $x_0,\ldots,x_{d_1+2} \in \E, \alpha \in (0,1)$, a function
\begin{equation*}
\varphi(\underline{x}_{0,d_1+1},\alpha,x_{d_1+2}) = \left(\underline{x}_{1,d_1+2},\frac{f_{x_{d_1+1}}^1(x_{d_1+2})(q+p\alpha)}{G(\ux_{d_1+1,d_1+2},\alpha)}\right)
\end{equation*}
We will show that $\eta_{n+1} = \varphi(\eta_n,X_{n+1})$. Notice that we get (see  Lemma~\ref{PojRozregCiagowMark-dodatek-lemat-pin jako funkcja pin-d-1}(or Lemma~A5 in \cite{SarSza11:transition}) ($l=0$))
\begin{eqnarray}\label{PojRozregCiagowMark-JedenKrokProcesPi}
\Pi_{n+1} = \frac{f_{X_n}^1(X_{n+1})(q+p\Pi_n)}{G(\uX_{n,n+1},\Pi_n)}.
\end{eqnarray}
Hence
\begin{eqnarray*}
\varphi(\eta_n,X_{n+1}) &=& \varphi(\underline{X}_{n-d_1-1,n}, \Pi_n,X_{n+1}) 
= \left(\underline{X}_{n-d_1,n},X_{n+1}, \frac{f_{X_n}^1(X_{n+1})(q+p\Pi_n)}{G(\uX_{n,n+1},\Pi_n)}\right) \nonumber\\
&=& \left(\underline{X}_{n-d,n+1}, \Pi_{n+1}\right) = \eta_{n+1} \nonumber.
\end{eqnarray*}
Define $\hat{\cF}_n = \sigma(\theta, \uX_{0,n})$. To see that the conditional distribution of $X_{n+1}$ given $\mathcal{F}_n$ is a function of $\eta_n$, let us consider the conditional expectation of $u(X_{n+1})$ for any Borel function $u: \E \longrightarrow \R$ given $\mathcal{F}_n$. Having Lemma~A1 we get:
\begin{eqnarray*}
\bE_x(u(X_{n+1})\mid \mathcal{F}_n)
&=& \bE_x\left(u(X_{n+1})(1-\Pi_{n+1}) \mid \mathcal{F}_n\right) + \bE_x\left(u(X_{n+1})\Pi_{n+1} \mid \mathcal{F}_n\right) \nonumber \\
 &=& \bE_x\left(u(X_{n+1})\one_{\{\theta > n+1\}} \mid \mathcal{F}_n\right) + \bE_x\left(u(X_{n+1})\one_{\{\theta \leq n+1\}} \mid \mathcal{F}_n\right) \nonumber \\
 &=& \bE_x\left(\bE_x(u(X_{n+1})\one_{\{\theta > n+1\}}\mid \hat{\cF}_n )\mid \mathcal{F}_n\right) + \bE_x\left(\bE_x(u(X_{n+1})\one_{\{\theta \leq n+1\}}\mid \hat{\cF}_n ) \mid \mathcal{F}_n\right) \nonumber \\
&=& \bE_x\left(\one_{\{\theta > n+1\}}\bE_x(u(X_{n+1})\mid \hat{\cF}_n )\mid \mathcal{F}_n\right) + \bE_x\left(\one_{\{\theta \leq n+1\}}\bE_x(u(X_{n+1})\mid \hat{\cF}_n)  \mid \mathcal{F}_n\right) \nonumber \\
&=& \int_{\E} u(y)f^0_{X_n}(y)\mu(dy)\bP_x(\theta > n+1 \mid \cF_n) + \int_{\E} u(y)f^1_{X_n}(y)\mu(dy)\bP_x(\theta \leq n+1 \mid \cF_n) \nonumber\\
 &=& \int_{\E} u(y)(p(1-\Pi_n)f^0_{X_n}(y) + (q+p\Pi_n)f^1_{X_n}(y))\mu(dy) = \int_{\E} u(y)G(X_n, y, \Pi_n)\mu(dy)\nonumber
\end{eqnarray*}
This is our claim.
\end{proof}

Lemmata \ref{ZmianaWyplaty} and \ref{FunkcjaMarkowska} are crucial for the solution of the posed
problem (\ref{PojRozregCiagowMark-Problem}). They show that the initial problem can be reduced to the problem of stopping Markov random function $\eta_n = (\underline{X}_{n-d_1-1,n},\Pi_n )$ with the payoff given by the equation (\ref{NowaWyplata}). In the consequence we can use tools of the optimal stopping theory for finding the stopping time $\tau^{*}$ such that
\begin{eqnarray}\label{PrzeformulowanyProblem}
       \bE_x\left[ h(\underline{X}_{\tau^{*}-d_1-1,\tau^{*}},\Pi_{\tau^{*}}) \right] = \sup_{\tau \in \frS^X_{d+1}}\bE_x\left[ h(\underline{X}_{\tau-d_1-1,\tau},\Pi_{\tau}) \right].
\end{eqnarray}
To solve the reduced problem (\ref{PrzeformulowanyProblem}) for any Borel function
$u: \E^{d_1+2}\times [0,1]\longrightarrow \R$ let us define operators:
\small
\begin{eqnarray}
  \bT u(\underline{x}_{0,d_1+1},\alpha) &=& \bE_{x}\left[u(\underline{X}_{n-d_1,n+1},\Pi_{n+1})\mid \underline{X}_{n-1-d_1,n}=\underline{x}_{0,d_1+1},\Pi_n = \alpha \right], \nonumber\\
  \bQ u(\underline{x}_{0,d_1+1},\alpha) &=& \max\{u(\underline{x}_{0,d_1+1},\alpha), \bT u(\underline{x}_{0,d_1+1},\alpha) \} \nonumber.
\end{eqnarray}

By the definition of the operator $\bT$ and $\bQ$ we get
\begin{lemma}\label{OperatoryDlaNowejWyplaty}
    For the payoff function $h(\underline{x}_{0,d_1+1},\alpha)$ characterized by (\ref{NowaWyplata}) and for the sequence $\{r_k\}_{k=0}^{\infty}$:%
{\small
\begin{eqnarray*}
 r_0(\underline{x}_{1,d_1+1}) &=& p\left[ 1-p^{d_2} +q\sum_{m=0}^{d_1}\frac{L_{m-1}(\underline{x}_{1,d_1+1}) }{p^m L_{0}(\underline{x}_{1,d_1+1})}  \right], \nonumber\\
r_k(\underline{x}_{1,d_1+1}) &=& p\int_{\E} f_{x_{d_1+1}}^0(x_{d_1+2})\max\left\{1-p^{d_2}+q\sum_{m=1}^{d_1+1}\frac{L_{m}(\underline{x}_{1,d_1+2})}{p^m L_{0}(\underline{x}_{1,d_1+2})};r_{k-1}(\underline{x}_{2,d_1+2}) \right\}\mu(dx_{d_1+2}),\nonumber
\end{eqnarray*}
\normalsize}
the following formulae hold:%
{\small
\begin{eqnarray}
 \bQ^k h_1(\underline{x}_{1,d_1+2},\alpha) &=& (1-\alpha)\max\left\{1-p^{d_2}+q\sum_{m=1}^{d_1+1}\frac{L_{m}(\underline{x}_{1,d_1+2})}{p^m L_{0}(\underline{x}_{1,d_1+2})};r_{k-1}(\underline{x}_{2,d_1+2}) \right\},\; k \geq 1, \nonumber \\
\bT\;\bQ^k h_1(\underline{x}_{1,d_1+2},\alpha) &=& (1-\alpha)r_k(\underline{x}_{2,d_1+2}),\; k \geq 0 \nonumber.
\end{eqnarray}
\normalsize}
\end{lemma}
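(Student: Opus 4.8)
The plan is to prove both identities simultaneously by induction on $k$, the engine being that $\bT$ and $\bQ$ preserve the class of functions of the \emph{separated} form $(1-\alpha)\,w(\underline{x}_{1,d_1+2})$ and that on such functions $\bT$ acts on the factor $w$ through a single state-independent integral operator. First I would compute $\bT$ on a separated function $u(\underline{x}_{1,d_1+2},\alpha)=(1-\alpha)w(\underline{x}_{1,d_1+2})$. By Lemma~\ref{FunkcjaMarkowska} the conditional law of $X_{n+1}$ given $\mathcal{F}_n$ has $\mu$-density $y\mapsto G(X_n,y,\Pi_n)=p(1-\Pi_n)f_{X_n}^0(y)+(q+p\Pi_n)f_{X_n}^1(y)$, while (\ref{PojRozregCiagowMark-JedenKrokProcesPi}) gives $1-\Pi_{n+1}=p(1-\Pi_n)f_{X_n}^0(X_{n+1})/G(X_n,X_{n+1},\Pi_n)$. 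Inserting both into the definition of $\bT$, the normaliser $G$ carried by $1-\Pi_{n+1}$ cancels against the conditional density $G$, and one is left with
\[
\bT u(\underline{x}_{0,d_1+1},\alpha)=p(1-\alpha)\int_{\E}f_{x_{d_1+1}}^0(y)\,w(\underline{x}_{1,d_1+1},y)\,\mu(dy)=:(1-\alpha)\,(\mathcal{R} w)(\underline{x}_{1,d_1+1}).
\]
Since $1-\alpha\ge 0$ this also yields $\bQ u(\underline{x}_{0,d_1+1},\alpha)=(1-\alpha)\max\{w(\underline{x}_{1,d_1+2}),(\mathcal{R} w)(\underline{x}_{1,d_1+1})\}$, so both operators stay inside the separated class.

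Next I would iterate. The payoff $h_1$ is itself separated, $h_1=(1-\alpha)c$ with $c$ the likelihood-ratio bracket appearing in the statement of the lemma. A straightforward induction using the previous paragraph then gives $\bQ^k h_1=(1-\alpha)c_k$ and $\bT\bQ^k h_1=(1-\alpha)\mathcal{R} c_k$ for all $k\ge 0$, where $c_0=c$ and $c_k=\max\{c_{k-1},\mathcal{R} c_{k-1}\}$; here one uses $\bQ u\ge u$ (so the $c_k$ increase) together with the monotonicity of $\mathcal{R}$ to rewrite $\max\{c_{k-1},\mathcal{R} c_{k-1}\}$ as $\max\{c,\mathcal{R} c_{k-1}\}$, since on $\{c_{k-1}>c\}$ the inductive identity $c_{k-1}=\mathcal{R} c_{k-2}$ and $c_{k-2}\le c_{k-1}$ force $c_{k-1}\le\mathcal{R} c_{k-1}$. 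Everything then reduces to identifying $\mathcal{R} c_{k-1}$ with $r_{k-1}$ up to the cosmetic relabelling of free variables $\underline{x}_{1,d_1+1}\leftrightarrow\underline{x}_{2,d_1+2}$: once this is done, $\max\{c,r_{k-1}\}$ is exactly the bracket asserted for $\bQ^k h_1$, and $\mathcal{R}$ applied to that maximum is literally the integral defining $r_k$, so the recursion closes.

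The remaining work is to evaluate $\mathcal{R}$ on the likelihood-ratio function $c$, i.e. the base case $r_0=\mathcal{R} c$. I would peel the last transition off the window: from the definition of $L_m$ one has $L_0(\underline{x}_{1,d_1+2})=L_0(\underline{x}_{1,d_1+1})f_{x_{d_1+1}}^0(x_{d_1+2})$ and $L_m(\underline{x}_{1,d_1+2})=L_{m-1}(\underline{x}_{1,d_1+1})f_{x_{d_1+1}}^1(x_{d_1+2})$ for $m\ge 1$, whence
\[
f_{x_{d_1+1}}^0(x_{d_1+2})\,\frac{L_m(\underline{x}_{1,d_1+2})}{p^m L_0(\underline{x}_{1,d_1+2})}=f_{x_{d_1+1}}^1(x_{d_1+2})\,\frac{L_{m-1}(\underline{x}_{1,d_1+1})}{p^m L_0(\underline{x}_{1,d_1+1})}.
\]
Substituting into $\mathcal{R} c$ and integrating, using $\int_{\E}f_x^i(y)\mu(dy)=1$ for $i=0,1$, collapses the constant term and shifts the likelihood-ratio sum, producing the stated $r_0$; the product convention $\prod_{j_1}^{j_2}=1$ for $j_1>j_2$ takes care of the degenerate boundary term. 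The inductive step needs no new computation, only the observation that $\mathcal{R}$ applied to $\max\{c,r_{k-1}\}$ is, by the very same peeling identity, the displayed integral for $r_k$.

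I expect the only conceptual obstacle to be the first step: recognising that the normaliser $G$ cancels, which is what turns $\bT$ from an unwieldy ratio-expectation into a plain integral operator acting on the deterministic factor. After that the argument is bookkeeping, and the part most prone to slips is keeping the index windows aligned — the forward shift from $\underline{X}_{n-d_1-1,n}$ to $\underline{X}_{n-d_1,n+1}$ built into $\bT$, and the ranges of $m$ in $c$ versus in $r_k$ — so the last step should be carried out with the $L_m$ indices written out explicitly.
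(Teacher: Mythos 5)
Your proof is correct and follows essentially the same route as the paper's: the cancellation of the normaliser $G$ between $1-\Pi_{n+1}$ and the conditional density of $X_{n+1}$ turns $\bT$ on separated functions $(1-\alpha)w$ into the plain integral operator $\mathcal{R}w=p\int f^0 w\,d\mu$, and the induction then closes exactly as you describe (your explicit argument that $\max\{c_{k-1},\mathcal{R}c_{k-1}\}=\max\{c,\mathcal{R}c_{k-1}\}$ is a step the paper leaves implicit). One caveat: what $\mathcal{R}c$ actually yields is $p\bigl[1-p^{d_2}+q\sum_{m=1}^{d_1+1}L_{m-1}/(p^{m}L_{0})\bigr]$, which agrees with the paper's own derivation inside its proof but differs by an index shift from the displayed definition of $r_0$ in the lemma (where the $m=0$ term would involve the undefined $L_{-1}$), so you are matching the correct formula rather than the misprinted one.
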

\begin{proof}
By the definition of the operator $\bT$ and using Lemma \ref{PojRozregCiagowMark-Pi_n_JakoFunkcja_Pi_n_minus_d_minus_1} ($l=0$) given that $(\underline{X}_{n-d_1-1,n}, \Pi_n) = (\ux_{1,d_1+2},\alpha)$ we get
{
\setlength\arraycolsep{0pt}
\begin{eqnarray*}
\bT h(\underline{x}_{1,d_1+2},\alpha) &=& \bE_{x}\left[h(\underline{X}_{n-d_1,n+1},\Pi_{n+1}) \mid \underline{X}_{n-d_1-1,n} = \ux_{1,d_1+2},\Pi_n = \alpha \right] \nonumber\\
&=& \bE_{x}\left[ \bigg(1-p^{d_2}+q\sum_{m=1}^{d_1+1}\frac{L_{m}(\underline{X}_{n-d_1,n+1})}{p^m L_{0}(\underline{X}_{n-d_1,n+1})}\bigg)(1-\Pi_{n+1}) \mid \underline{X}_{n-d_1-1,n} = \ux_{1,d_1+2},\Pi_n = \alpha \right]\nonumber\\
&=& p(1-\alpha)\int_{\E} \left(1-p^{d_2}+q\sum_{m=1}^{d_1+1}\frac{L_{m-1}(\ux_{2,d_1+2})}{p^m L_{0}(\ux_{2,d_1+2})}\frac{f_{x_{d_1+2}}^1(x_{d_1+3})}{f_{x_{d_1+2}}^0(x_{d_1+3})}\right)\\
&&\mbox{}\times\frac{f_{x_{d_1+2}}^0(x_{d_1+3})G(\ux_{d_1+2,d_1+3},\alpha)}{G(\ux_{d_1+2,d_1+3},\alpha)}\mu(dx_{d_1+3})\nonumber\\
&=&p(1-\alpha)\left[ 1-p^{d_2}+q\sum_{m=1}^{d_1+1}\int_{\E} \frac{L_{m-1}(\ux_{2,d_1+2})}{p^m L_{0}(\ux_{2,d_1+2})}f_{x_{d_1+2}}^1(x_{d_1+3})\mu(dx_{d_1+3})\right]\nonumber\\
&=&(1-\alpha)p\left[ 1-p^{d_2}+q\sum_{m=1}^{d_1+1}\frac{L_{m-1}(\ux_{2,d_1+2})}{p^m L_{0}(\ux_{2,d_1+2})}\right]=(1-\alpha)r_0(\underline{x}_{2,d_1+2}).\nonumber
\end{eqnarray*}
}
Directly from the definition of $\bQ$ we get
\begin{eqnarray*}
\bQ h(\underline{x}_{1,d_1+2},\alpha) &=& \max\left\{h(\underline{x}_{1,d_1+2},\alpha);\;\bT h(\underline{x}_{1,d_1+2},\alpha) \right\}\nonumber\\
  &=&  (1-\alpha)\max\left\{ 1-p^{d_2}+q\sum_{m=1}^{d_1+1}\frac{L_{m}(\underline{x}_{1,d_1+2})}{p^m L_{0}(\underline{x}_{1,d_1+2})} ;\;r_0(\underline{x}_{2,d_1+2})  \right\} \nonumber.
\end{eqnarray*}
Suppose now that Lemma \ref{OperatoryDlaNowejWyplaty} holds for $\bT \bQ^{k-1}h$ and $\bQ^kh$ for
some $k > 1 $. Then, using similar transformation as in the case of $k=0$, we get
{
\setlength\arraycolsep{0pt}
\begin{eqnarray}
\bT \bQ^k h(\underline{x}_{1,d_1+2},\alpha) 
 &=& \bE_{x}\left[\bQ^kh(\underline{X}_{n-d_1,n+1},\Pi_{n+1}) \mid \underline{X}_{n-d_1-1,n} = \ux_{1,d_1+2},\Pi_n = \alpha\right] \nonumber\\
 &=& \int_{\E} \! \left[\max\!\! \left\{\! 1-p^{d_2}+q \!\sum_{m=1}^{d_1+1}\frac{L_{m}(\ux_{2,d_1+3})}{p^m L_{0}(\ux_{2,d_1+3})}; r_{k-1}(\ux_{3,d_1+3})\!\right\}\!(1-\alpha)pf_{x_{d_1+2}}^0(x_{d_1+3})\!\right]\!\mu(dx_{d_1+3})\nonumber\\  &=&(1-\alpha)r_k(\ux_{2,d_1+2})\nonumber.
\end{eqnarray}
}
Moreover
\setlength\arraycolsep{0pt}
\begin{eqnarray*}
\bQ^{k+1}h(\underline{x}_{1,d_1+2},\alpha) 
  &=& \max\left\{h(\underline{x}_{1,d_1+2},\alpha);\;\bT \bQ^kh(\underline{x}_{1,d_1+2},\alpha) \right\}\nonumber\\
  &=&  (1-\alpha)\max\left\{ 1-p^{d_2}+q\sum_{m=1}^{d_1+1}\frac{L_{m}(\underline{x}_{1,d_1+2})}{p^m L_{0}(\underline{x}_{1,d_1+2})} ;\;r_k(\underline{x}_{2,d_1+2})  \right\} \nonumber.
\end{eqnarray*}
This completes the proof.
\end{proof}

The following theorem is the main result of the paper.
\begin{theorem}
      (a)  The solution of the problem (\ref{PojRozregCiagowMark-Problem}) is given by:
        \vspace{-1ex}
        \begin{eqnarray}\label{optymalnyStop}
            \tau^{*} = \inf\{n\geq d_1+1:1-p^{d_2}+q\sum_{m=1}^{d_1+1}\frac{L_{m}(\uX_{n-d_1-1,n})} { p^m L_{0}(\uX_{n-d_1-1,n})} \geq r^{*}(\underline{X}_{n-d_1,n})  \}
        \end{eqnarray}
        where $r^{*}(\underline{X}_{n-d,n}) = \lim_{k\longrightarrow \infty }r_k(\underline{X}_{n-d,n})$.

     (b)  The value of the problem, \emph{i.e.} the maximal probability for (\ref{PojRozregCiagowMark-Problem})  given $X_0 = x$, is equal to
        \setlength\arraycolsep{0pt}
        \begin{eqnarray*}
         \bP_{x}(-d_1\leq \theta - \tau^{*}  \leq d_2) 
         &=& p^{d_1+1}\int_{\E^{d_1+1}}\max\left\{1-p^{d_2}+q\sum_{m=1}^{d_1+1} \frac{L_{m}(x,\underline{x}_{1,d_1+1})}{p^m L_{0}(x,\underline{x}_{1,d_1+1})}; r^{*}(\underline{x}_{1,d_1+1}) \right\} \nonumber\\
         && \times L_0(x,\ux_{1,d_1+1})\mu(dx_1,\ldots,dx_{d_1+1}).
 \end{eqnarray*}
\end{theorem}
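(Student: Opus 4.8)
The plan is to run the whole argument through the reduced problem~(\ref{PrzeformulowanyProblem}) together with the successive-approximation formulae of Lemma~\ref{OperatoryDlaNowejWyplaty}. By Lemmata~\ref{ZmianaWyplaty} and~\ref{FunkcjaMarkowska}, solving~(\ref{PojRozregCiagowMark-Problem}) is the same as optimally stopping the bounded nonnegative payoff $h$ of~(\ref{NowaWyplata}) along the Markov random function $\eta_n=(\uX_{n-d_1-1,n},\Pi_n)$, $n\geq d_1+1$; here $h(\eta_n)=Z_n^{(d_1,d_2)}$, and, as shown in the proof of the first lemma above, $Z_n^{(d_1,d_2)}\to0$ a.s. By the general theorem on optimal stopping of Markov sequences (see~\cite{shi78:optimal}), the value function of~(\ref{PrzeformulowanyProblem}) is $w=\lim_{k\to\infty}\bQ^k h$, so $V_n^{(d_1,d_2)}=w(\eta_n)$, and, since $h(\eta_n)\to0$, the rule $\tau^*=\inf\{n\geq d_1+1: h(\eta_n)=w(\eta_n)\}=\inf\{n\geq d_1+1: h(\eta_n)\geq\bT w(\eta_n)\}$ is optimal in $\frS^X_{d_1+1}$; by Lemma~\ref{PojRozregCiagowMark-lematCzasStopuMax} it is optimal in $\frS^X$ as well. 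It remains to make $w$, $\bT w$, and the stopping condition explicit.

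From Lemma~\ref{OperatoryDlaNowejWyplaty} the iterates $\bQ^k h$ and $\bT\bQ^k h$ are known in closed form via the auxiliary sequence $\{r_k\}$. Since $\bQ u\geq u$, the sequence $\bQ^k h$ is nondecreasing in $k$; applying the order-preserving operator $\bT$ gives $\bT\bQ^{k-1}h\leq\bT\bQ^k h$, whence $r_{k-1}\leq r_k$, so $r^*:=\lim_k r_k$ exists (a.s.\ finite where it is needed, because $(1-\Pi_n)r^*\leq 1$). Letting $k\to\infty$ in Lemma~\ref{OperatoryDlaNowejWyplaty} and carrying $\bT$ through the monotone limit,
\[
w(\ux_{1,d_1+2},\alpha)=(1-\alpha)\max\left\{1-p^{d_2}+q\sum_{m=1}^{d_1+1}\frac{L_m(\ux_{1,d_1+2})}{p^m L_0(\ux_{1,d_1+2})};\;r^*(\ux_{2,d_1+2})\right\},
\]
and $\bT w(\ux_{1,d_1+2},\alpha)=(1-\alpha)r^*(\ux_{2,d_1+2})$. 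Evaluating at $\eta_n$, both sides of $h(\eta_n)\geq\bT w(\eta_n)$ carry the a.s.\ strictly positive factor $1-\Pi_n$; cancelling it turns that condition into exactly the inequality in~(\ref{optymalnyStop}), which proves~(a) (the a.s.\ finiteness of $\tau^*$ being already part of the first lemma).

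For~(b), the optimal value equals $\bE_x[h(\eta_{\tau^*})]=\bE_x[w(\eta_{d_1+1})]$, because $w(\eta_{d_1+1})=V_{d_1+1}^{(d_1,d_2)}$ and $\sup_{\tau\in\frS^X_{d_1+1}}\bE_x[Z_\tau^{(d_1,d_2)}]=\bE_x[V_{d_1+1}^{(d_1,d_2)}]$ by the usual directed-family/monotone-convergence argument. Since $1-\Pi_{d_1+1}=\bP_x(\theta>d_1+1\mid\cF_{d_1+1})$ while the remaining $\max$-factor of $w(\eta_{d_1+1})$ is $\cF_{d_1+1}$-measurable, we get $\bE_x[w(\eta_{d_1+1})]=\bE_x[\one_{\{\theta>d_1+1\}}\max\{\cdot\}]$; conditioning on $\theta$, and using that on $\{\theta>d_1+1\}$ the segment $(X_0,\ldots,X_{d_1+1})$ follows the law of $X^0$ started at $x$, so it has density $L_0(x,\ux_{1,d_1+1})$ with respect to $\mu(dx_1,\ldots,dx_{d_1+1})$, together with $\bP_x(\theta>d_1+1)=(1-\pi)p^{d_1+1}$ read off from~(\ref{rozkladyTeta}), yields the integral displayed in~(b).

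The steps that are not pure bookkeeping are the two limit passages: that the successive approximations $\bQ^k h$ actually converge to the value function $w$ — which is where the boundedness of $h$ and the a.s.\ convergence $Z_n^{(d_1,d_2)}\to0$ enter — so that the threshold $r^*$ is well defined; and the interchange of $\bT$ with the monotone limit giving $\bT w=(1-\alpha)r^*$. After these, (a) is the one-line cancellation above and (b) is the evaluation of $\bE_x[(1-\Pi_{d_1+1})(\cdot)]$ by conditioning on $\{\theta>d_1+1\}$; everything else is the index arithmetic inherited from Lemma~\ref{OperatoryDlaNowejWyplaty}.
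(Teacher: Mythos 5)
Your proposal is correct and follows essentially the same route as the paper: reduction via Lemmata \ref{ZmianaWyplaty} and \ref{FunkcjaMarkowska} to optimal stopping of $\eta_n$ with payoff $h$, identification of the value function with $\lim_k\bQ^k h$ from Lemma \ref{OperatoryDlaNowejWyplaty}, cancellation of the factor $1-\Pi_n$ to obtain (\ref{optymalnyStop}), and for (b) the tower-property step $\bE_x[(1-\Pi_{d_1+1})\max\{\cdot\}]=\bE_x[\one_{\{\theta>d_1+1\}}\max\{\cdot\}]$ followed by integration against the density $L_0(x,\ux_{1,d_1+1})$ (you in fact supply more detail than the paper on the monotonicity of $r_k$ and the interchange of $\bT$ with the limit). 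The one point of divergence is instructive: your correct evaluation $\bP_x(\theta>d_1+1)=(1-\pi)p^{d_1+1}$ shows that the constant $p^{d_1+1}$ printed in part (b) is missing the factor $\bar{\pi}=1-\pi$ coming from the atom of $\theta$ at $0$; the paper's own proof stops at the unevaluated expression $\bP_x(\theta>d_1+1)$, so your version of the constant is the right one.
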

\begin{proof} Part (a). According to Lemma \ref{PojRozregCiagowMark-lematCzasStopuMax} we look for a stopping time equal at least $d_1+1$. From the optimal stopping theory (c.f \cite{shi78:optimal}) we know that $\tau_0$ defined by (\ref{stopIntuicyjny}) can be expressed as
\[
    \tau_0 = \inf\{n \geq d_1+1: h(\underline{X}_{n-1-d_1,n},\Pi_n) \geq \bQ^{*}h(\underline{X}_{n-1-d_1,n},\Pi_n) \}
\]
where
$\bQ^{*}h(\underline{X}_{n-1-d_1,n},\Pi_n) = \lim_{k \longrightarrow \infty} \bQ^{k}h(\underline{X}_{n-1-d_1,n},\Pi_n)$.
According to Lemma \ref{OperatoryDlaNowejWyplaty}:
\setlength\arraycolsep{0.25pt}
\begin{eqnarray*}
\tau_0 &=& \inf\left\{n \geq d_1+1: 1-p^{d_2}+q\sum_{m=1}^{d_1+1}\frac{L_{m}(\underline{X}_{n-d_1-1,n})}{p^m L_{0}(\underline{X}_{n-d_1-1,n})} \right. \nonumber \\
        && \;\;\;\;\;\;\;\;\;\;\;\left. \geq \max\{ 1-p^{d-2}+q\sum_{m=1}^{d_1+1}\frac{L_{m}(\underline{X}_{n-d_1-1,n})}{p^m L_{0}(\underline{X}_{n-d_1-1,n})} ;\;r^{*}(\underline{X}_{n-d_1,n})\} \right\} \nonumber\\
       &=& \inf \left\{n \geq d_1+1: 1-p^{d_2}+q\sum_{m=1}^{d_1+1}\frac{L_{m}(\underline{X}_{n-d_1-1,n})}{p^m L_{0}(\underline{X}_{n-d_1-1,n})} \geq r^{*}(\underline{X}_{n-d_1,n})\right\} \nonumber\\
       &=& \tau^{*}. \nonumber
\end{eqnarray*}

Part (b). Basing on the known facts from the optimal stopping theory we can write:
{ \small
        \setlength\arraycolsep{0pt}
        \begin{eqnarray*}
         \bP_{x}&(&-d_1\leq \theta - \tau^{*}  \leq d_2) 
         = \bE_{x}\left( h^{\star}_1(\underline{X}_{0,d_1+1},\Pi_{d_1+1})\right) \nonumber\\
         &=& \bE_{x}\left( (1-\Pi_{d_1+1})\max\left\{ 1-p^{d_2}+q\sum_{m=1}^{d_1+1}\frac{L_{m}(\underline{X}_{0,d_1+1})}{p^m L_{0}(\underline{X}_{0,d_1+1})} ;\;r^{\star}(\underline{X}_{1,d_1+1})  \right\} \right) \nonumber\\
         &=& \bE_{x}\left( \bE_{x}(\one_{\{\theta>d_1+1\}}\mid \cF_{d_1+1})\max\left\{ 1-p^{d_2}+q\sum_{m=1}^{d_1+1}\frac{L_{m}(\underline{X}_{0,d_1+1})}{p^m L_{0}(\underline{X}_{0,d_1+1})} ;\;r^{\star}(\underline{X}_{1,d_1+1})  \right\} \right) \nonumber\\
         &=& \bE_{x}\left( \one_{\{\theta>d_1+1\}}\max\left\{ 1-p^{d_2}+q\sum_{m=1}^{d_1+1}\frac{L_{m}(\underline{X}_{0,d_1+1})}{p^m L_{0}(\underline{X}_{0,d_1+1})} ;\;r^{\star}(\underline{X}_{1,d_1+1})  \right\}\right) \nonumber\\
         &=& \bP_{x}(\theta>d_1+1)\int_{\E^{d_1+1}}\max\left\{1-p^{d_2}+q\sum_{m=1}^{d_1+1} \frac{L_{m}(x,\underline{x}_{1,d_1+1})}{p^m L_{0}(x,\underline{x}_{1,d_1+1})}; r^{*}(\underline{x}_{1,d_1+1}) \right\} \nonumber\\
         && \times L_0(x,\ux_{1,d_1+1})\mu(dx_1,\ldots,dx_{d_1+1}) \nonumber
\end{eqnarray*}
\normalsize}
This ends the proof of the theorem.
\end{proof}

\appendix[Lemmata]

\begin{lemma}
\label{PojRozregCiagowMark-dodatek-lemat-pin d krokow naprzod}
Let $n>0$, $k \geq 0$ then:
\begin{eqnarray}
    \label{PierwszyCzlonFunkcjiWyplaty}
\bP_{x}( \theta \leq n+k \mid \cF_n ) &=& 1 -p^k(1-\Pi_n).
\end{eqnarray}
\end{lemma}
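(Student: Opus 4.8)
The plan is to compute the conditional probability $\bP_x(\theta\leq n+k\mid\cF_n)$ directly, reducing it to the already-introduced posterior process $\Pi_n=\bP_x(\theta\leq n\mid\cF_n)$ by splitting according to whether $\theta\leq n$ or $\theta>n$. Writing
\[
\bP_x(\theta\leq n+k\mid\cF_n)=\bP_x(\theta\leq n\mid\cF_n)+\bP_x(n<\theta\leq n+k\mid\cF_n)=\Pi_n+\bP_x(n<\theta\leq n+k\mid\cF_n),
\]
the task is to evaluate the second term. On the event $\{\theta>n\}$ the observations $X_0,\ldots,X_n$ have been generated solely by the pre-disorder process $\{X^0_n\}$, so the posterior law of $\theta$ restricted to $\{\theta>n\}$ is just the prior (geometric) law renormalised: for $j>n$,
\[
\bP_x(\theta=j\mid\cF_n)=(1-\Pi_n)\,\frac{\bP(\theta=j)}{\bP(\theta>n)}=(1-\Pi_n)\,\frac{(1-\pi)p^{j-1}q}{(1-\pi)p^{n}}=(1-\Pi_n)\,p^{j-n-1}q,
\]
using the memoryless structure of the geometric distribution in \eqref{rozkladyTeta}. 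I expect this "conditional prior on $\{\theta>n\}$" identity to be the one genuinely substantive point; it should be justified either by a short Bayes-formula computation using the joint density $S_n$ of \eqref{gestoscLaczna} (the factor $p^{n}L_0(\ux_{0,n})$ is exactly the $\{\theta>n\}$ contribution, and it factors out of the normalisation) or, more cleanly, by invoking the corresponding identity from \cite{SarSza11:transition} already referenced elsewhere in the paper.

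Given that identity, the computation finishes by a finite geometric sum:
\[
\bP_x(n<\theta\leq n+k\mid\cF_n)=\sum_{j=n+1}^{n+k}(1-\Pi_n)p^{j-n-1}q=(1-\Pi_n)\,q\sum_{i=0}^{k-1}p^{i}=(1-\Pi_n)(1-p^{k}).
\]
Adding $\Pi_n$ gives $\Pi_n+(1-\Pi_n)(1-p^k)=1-p^k(1-\Pi_n)$, which is exactly \eqref{PierwszyCzlonFunkcjiWyplaty}. The edge case $k=0$ is consistent, giving $\Pi_n$, and for $k\geq 1$ the sum is a standard $\sum_{i=0}^{k-1}p^i=(1-p^k)/q$.

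The only real obstacle is making the conditional-prior step rigorous rather than heuristic: one must check that conditioning on $\cF_n=\sigma(X_0,\ldots,X_n)$ within the event $\{\theta>n\}$ does not distort the tail of $\theta$. This follows because, conditionally on $\{\theta>n\}$, the vector $\uX_{0,n}$ depends only on $\{X^0_m\}$ and is independent of $\theta$ (by the construction in Section~\ref{sformProblem}, where $\{X^0_n\}$, $\{X^1_n\}$ and $\theta$ are mutually independent and $X_m=X^0_m$ for $m<\theta$); hence $\bP_x(\theta=j,\,\uX_{0,n}\in\underline B_{0,n})=\bP(\theta=j)\,\bP_x^{0}(\uX_{0,n}\in\underline B_{0,n})$ for $j>n$, and dividing through yields the claimed conditional law. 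Everything else is bookkeeping with the geometric series, so I would keep the write-up to the two displays above plus a one-line remark on the $\{\theta>n\}$ independence.
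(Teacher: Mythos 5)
Your proof is correct and follows essentially the same route as the paper's: both reduce the claim to the fact that, given $\{\theta>n\}$, the tail of $\theta$ is conditionally independent of $\cF_n$ and memoryless, so that $\bP_x(\theta>n+k\mid\cF_n)=p^k(1-\Pi_n)$ (the paper conditions on $\widetilde{\cF}_n=\sigma(\cF_n,\one_{\{\theta>n\}})$ and evaluates the complementary event directly, whereas you sum the posterior atoms $\bP_x(\theta=j\mid\cF_n)$ as a geometric series). Your explicit justification of the conditional-prior identity via the independence of $\{X^0_n\}$ and $\theta$ is exactly the point the paper's proof uses implicitly in the step $\bE_x(\one_{\{\theta>n+k\}}\mid\widetilde{\cF}_n)=\bE_x(\one_{\{\theta>n+k\}}\mid\theta>n)$.
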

\begin{proof} It is enough to show that for $D \in \cF_n$
\begin{eqnarray}
\int_D \one_{\{\theta >n+k\}} d\bP_x = \int_D p^k(1-\Pi_n) d\bP_x. \nonumber
\end{eqnarray}
Let us define $\widetilde{\cF}_n = \sigma(\cF_n, \one_{\{\theta >n\}})$. We have:
\setlength\arraycolsep{0.1pt}
\begin{eqnarray}
 \int_D \one_{\{\theta > n+k\}}d \bP_{x}  &=& \int_D \one_{\{\theta > n+k\}}\one_{\{\theta > n\}} d \bP_x = \int_{D \cap \{\theta > n\}} \!\!\!\one_{\{\theta > n+k\}} d \bP_x \nonumber\\
        &=& \int_{D \cap \{\theta > n\}} \!\!\!\!\!\! \bE_{x}( \one_{\{\theta > n+k\}} \mid \widetilde{\cF}_n) d \bP_x = \int_{D \cap \{\theta > n\}}\!\!\!\!\!\! \bE_{x}( \one_{\{\theta > n+k\}} \mid \theta > n) d \bP_x  \nonumber\\
        &=& \int_{D} \one_{\{\theta > n\}} p^k d \bP_x = \int_D (1-\Pi_n)p^k d \bP_x \nonumber
\end{eqnarray}
\end{proof}

\begin{lemma}
For $n > 0$ the following equality holds:
\setlength\arraycolsep{0pt}
\begin{eqnarray}
\label{PIn}
\bP_{x}&(& \theta > n \mid \cF_n ) = 1 - \Pi_n = \frac{\bar{\pi}p^nL_{0}(\underline{X}_{0,n})}{S(\underline{X}_{0,n})}.
  \end{eqnarray}
\end{lemma}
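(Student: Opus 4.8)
The plan is to read the claimed identity as Bayes' formula for densities: the function $S(\uX_{0,n})$ of (\ref{gestoscLaczna}) is the full conditional density of the observed vector given $X_0=x$, while $\bar\pi p^nL_0(\uX_{0,n})$ is precisely the part of that density carried by $\{\theta>n\}$. Since $\{\theta>n\}=\Omega\setminus\{\theta\le n\}$, the equality $\bP_x(\theta>n\mid\cF_n)=1-\Pi_n$ is immediate from the definition $\Pi_n=\bP_x(\theta\le n\mid\cF_n)$; the whole content is therefore the identity
\[
\bP_x(\theta>n\mid\cF_n)=\frac{\bar\pi p^nL_0(\uX_{0,n})}{S(\uX_{0,n})}\qquad\text{a.s.}
\]

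First I would note that the right-hand side is a Borel function of $\uX_{0,n}$, hence $\cF_n$-measurable, and that it is almost surely well defined because $S$ is a $\bP_x$-density of $\uX_{0,n}$, so $\bP_x(S(\uX_{0,n})=0)=0$. It then suffices to check equality of integrals over the cylinders $\underline{D}_{0,n}=\{\uX_{0,n}\in\underline{B}_{0,n}\}$, $B_i\in\cB$, since these form a $\pi$-system generating $\cF_n$. On the right, using $\bP_x(\underline{D}_{0,n})=\int_{\underline{B}_{0,n}}S(\ux_{0,n})\mu(d\ux_{0,n})$ and the fact that $S\ge\bar\pi p^nL_0\ge0$ (so the cancellation below is harmless on $\{S=0\}$),
\[
\int_{\underline{D}_{0,n}}\frac{\bar\pi p^nL_0(\uX_{0,n})}{S(\uX_{0,n})}\,d\bP_x=\int_{\underline{B}_{0,n}}\bar\pi p^nL_0(\ux_{0,n})\,\mu(d\ux_{0,n}).
\]

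Next I would evaluate the left-hand integral by conditioning on $\theta$, using that $\theta$ is independent of the processes $\{X_n^0\}$ and $\{X_n^1\}$. For $k>n$ the switching rule forces $X_j=X_j^0$ for $0\le j\le n$, so the conditional law of $\uX_{0,n}$ given $\{\theta=k\}$ and $X_0=x$ is that of $(X_0^0,\dots,X_n^0)$ started at $x$, which by the transition-density assumption (\ref{TransProbab}) has density $L_0(\ux_{0,n})=\prod_{r=1}^{n}f_{x_{r-1}}^{0}(x_r)$ with respect to $\mu(d\ux_{0,n})$. Therefore
\[
\int_{\underline{D}_{0,n}}\one_{\{\theta>n\}}\,d\bP_x=\sum_{k=n+1}^{\infty}\bP(\theta=k)\,\bP_x(\underline{D}_{0,n}\mid\theta=k)=\Big(\sum_{k=n+1}^{\infty}\bar\pi p^{k-1}q\Big)\int_{\underline{B}_{0,n}}L_0(\ux_{0,n})\,\mu(d\ux_{0,n}),
\]
and since $\sum_{k=n+1}^{\infty}p^{k-1}q=p^{n}$ the two integrals coincide. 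A functional monotone-class argument then gives the a.s. identity, and together with the first paragraph this proves the lemma.

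The one step needing genuine care --- and the main obstacle --- is the identification of the conditional law of $\uX_{0,n}$ given $\{\theta=k\}$ for $k>n$ with the law of the pre-disorder chain $(X_0^0,\dots,X_n^0)$; this has to be read off the defining switching rule together with the independence of $\theta$ from $\{X_n^0\},\{X_n^1\}$ and of the two chains from one another. Everything else reduces to the elementary geometric sum $\sum_{k>n}p^{k-1}q=p^{n}$ and a standard argument with a generating $\pi$-system. Alternatively, one may avoid the conditioning bookkeeping by simply isolating the $\{\theta>n\}$ summand $\bar\pi p^nL_0$ directly in the defining expression (\ref{gestoscLaczna}) for $S_n$ and invoking Bayes' rule for densities, which is the same computation organized differently.
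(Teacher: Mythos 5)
Your proof is correct and follows essentially the same route as the paper's: both verify the defining property of conditional expectation on the generating cylinder sets $\{\uX_{0,n}\in\underline{A}_{0,n}\}$, using that $S$ is the joint density of $\uX_{0,n}$ and that $\bar\pi p^nL_0$ is exactly its $\{\theta>n\}$-component. You merely spell out the step the paper leaves implicit (conditioning on $\theta=k$, $k>n$, and summing the geometric tail to get $\bar\pi p^n$), which is a welcome addition rather than a deviation.
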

\begin{proof}
Put $\underline{D}_{0,n} = \{ \omega: \uX_{0,n} \in \underline{A}_{0,n}, A_i \in \cB\}$. Then:
\begin{eqnarray}
\bP_{x}(\underline{D}_{0,n})\bP_{x}&(&\theta>n|\underline{D}_{0,n})=\int_{\underline{D}_{0,n}}\one_{\{\theta > n\}}d\bP_x = \int_{\underline{D}_{0,n}}\bP_x(\theta > n | \cF_n)d\bP_x \nonumber \\
    &=& \int_{\underline{A}_{0,n}}\frac{\bar{\pi}p^{n} L_0(\underline{x}_{0,n})}{S(\ux_{0,n})}S(\ux_{0,n})\mu(d\ux_{0,n})= \int_{\underline{D}_{0,n}}\frac{\bar{\pi}p^{n}L_0(\underline{X}_{0,n})}{S(\uX_{0,n})}d\bP_x \nonumber
\end{eqnarray}
Hence, by the definition of the conditional expectation, we get the thesis.
\end{proof}

\begin{lemma}
For $\ux_{0,l+1} \in \E^{l+2}$, $\alpha \in [0,1]$ and the functions $S$ and $G$ given by the equations (\ref{gestoscLaczna}) and
(\ref{funkcjaG}) we have:
 \begin{eqnarray}
 \label{FaktoryzacjaGestosci}
     S(\uX_{0,n}) &=& S(\uX_{0,n-l-1})G(\uX_{n-l-1,n},\Pi_{n-l-1})
 \end{eqnarray}
\end{lemma}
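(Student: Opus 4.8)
\emph{Proof proposal.} It suffices to prove the identity pointwise for the density functions. By equation~(\ref{PIn}) the random quantity $\Pi_{n-l-1}$ equals, almost surely, the deterministic function $1-\bar\pi p^{n-l-1}L_0(\uX_{0,n-l-1})/S(\uX_{0,n-l-1})$ of $\uX_{0,n-l-1}$, so the statement reduces to an algebraic identity between the explicit formulas (\ref{gestoscLaczna}) and (\ref{funkcjaG}). Write $m:=n-l-1\ge 0$. The plan is to expand $S(\ux_{0,n})$ via (\ref{gestoscLaczna}), factor each likelihood $L_j(\ux_{0,n})$ across the index $m$, regroup, and read off $S(\ux_{0,m})\,G_{l+1}(\ux_{m,n},\Pi_m)$.

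The key non-trivial ingredient is the factorization of $L$ across the seam at $m$, which follows straight from the product definition of $L_j$ by splitting each product at $r=m$. When the last $l+1$ coordinates already lie beyond the switch (equivalently $0\le j\le l+1$) one obtains
\[
 L_j(\ux_{0,n}) \;=\; L_0(\ux_{0,m})\,L_j(\ux_{m,n}),
\]
whereas when the switch has occurred at or before index $m$ (equivalently $l+1\le j\le n$) all of the last $l+1$ transitions use $f^1$ and
\[
 L_j(\ux_{0,n}) \;=\; L_{j-l-1}(\ux_{0,m})\,L_{l+1}(\ux_{m,n});
\]
the two formulas agree at the overlap $j=l+1$, and the boundary terms are the special cases $j=0$ (no switch) and $j=n$ (the $\theta=0$ term).

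Next I would substitute these into (\ref{gestoscLaczna}), index the sum by the change point $k$, and split it at $k=m$. The terms with $k\le m$ together with the $\theta=0$ term $\pi L_n(\ux_{0,n})$ all carry the factor $L_{l+1}(\ux_{m,n})$ (second factorization); the terms with $k\ge m+1$ together with the $\theta>n$ term $\bar\pi p^{n}L_0(\ux_{0,n})$ all carry the factor $L_0(\ux_{0,m})$ (first factorization). Re-indexing the second group by $i=n-k$ (so that $p^{k-1}=p^{m}p^{l-i}$ and $p^{n}=p^{m}p^{l+1}$) and collecting, $S(\ux_{0,n})$ becomes
\[
 L_{l+1}(\ux_{m,n})\Bigl[\pi L_m(\ux_{0,m})+\bar\pi\sum_{k=1}^{m}p^{k-1}q\,L_{m-k+1}(\ux_{0,m})\Bigr]
 +\bar\pi p^{m}L_0(\ux_{0,m})\Bigl[\sum_{i=0}^{l}p^{l-i}q\,L_{i+1}(\ux_{m,n})+p^{l+1}L_0(\ux_{m,n})\Bigr].
\]
By (\ref{gestoscLaczna}) read at level $m$, the first bracket equals $S(\ux_{0,m})-\bar\pi p^{m}L_0(\ux_{0,m})$, and by (\ref{PIn}) we have $\bar\pi p^{m}L_0(\ux_{0,m})=S(\ux_{0,m})(1-\Pi_m)$; hence the first bracket is $S(\ux_{0,m})\Pi_m$ and the prefactor of the second bracket is $S(\ux_{0,m})(1-\Pi_m)$. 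Factoring $S(\ux_{0,m})$ out leaves exactly $\Pi_m L_{l+1}(\ux_{m,n})+(1-\Pi_m)\bigl(\sum_{i=0}^{l}p^{l-i}q\,L_{i+1}(\ux_{m,n})+p^{l+1}L_0(\ux_{m,n})\bigr)$, which is $G_{l+1}(\ux_{m,n},\Pi_m)$ by (\ref{funkcjaG}).

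I do not expect a conceptual obstacle here; the only delicate point is the bookkeeping in the middle step, namely matching the two branches of the $L$-factorization to the two summands in the definition of $G$, attaching the boundary terms $\theta=0$ and $\theta>n$ to the correct group, and tracking the powers of $p$ through the re-indexing. (A shorter probabilistic route is also available: compute the $\cF_m$-conditional density of $(X_{m+1},\dots,X_n)$ by conditioning further on $\theta$ and using $\bP_x(\theta=m+1+i\mid\cF_m)=q p^{i}(1-\Pi_m)$, which follows from~(\ref{PierwszyCzlonFunkcjiWyplaty}); the two bracketed expressions above then drop out directly, at the cost of a little extra measure-theoretic care relating $S(\uX_{0,n})/S(\uX_{0,m})$ to that conditional density.)
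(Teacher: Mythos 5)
Your argument is correct and is essentially the paper's own proof run in the opposite direction: both rest on the identity (\ref{PIn}) for $1-\Pi_{n-l-1}$, the factorization of $L_j(\ux_{0,n})$ across the seam at $m=n-l-1$, and the splitting of the sum in (\ref{gestoscLaczna}) according to whether the change point lies at or before $m$. The paper expands $S(\uX_{0,m})G(\uX_{m,n},\Pi_m)$ and matches it to the expansion of $S(\uX_{0,n})$, while you regroup $S(\ux_{0,n})$ directly into $S(\ux_{0,m})\,G_{l+1}(\ux_{m,n},\Pi_m)$; the bookkeeping is identical.
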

\begin{proof}
By (\ref{PIn}) we have
{\small
\setlength\arraycolsep{0pt}
\begin{eqnarray*}
S(\uX_{0,n-l-1})&\cdot &G(\uX_{n-l-1,n},\Pi_{n-l-1}) \nonumber\\
&=&S(\uX_{0,n-l-1}) \Pi_{n-l-1} L_{l+1}(\uX_{n-l-1,n})+ S(\uX_{0,n-l-1})(1-\Pi_{n-l-1})  \nonumber\\
&& \times
\left( \sum_{k=0}^{l}p^{l-k}q L_{k+1}(\uX_{n-l-1,n}) + p^{l+1}L_0(\uX_{n-l-1,n})\right) \nonumber\\
&\stackrel{(\ref{PIn})}{=}&
\left(S_{n-l-1}(\uX_{0,n-l-1})-\bar{\pi}p^{n-l-1}L_0(\uX_{0,n-l-1})\right)L_{l+1}(\uX_{n-l-1,n})\\
&&+\bar{\pi}p^{n-l-1}L_{0}(\uX_{0,n-l-1})\left(\sum_{k=0}^{l}p^{l-k}q L_{k+1}(\uX_{n-l-1,n})+ p^{l+1}L_0(\underline{X}_{n-l-1,n})\right)\nonumber\\
&=&\left(\pi L_{n-l-1}(\uX_{0,n-l-1})+\bar{\pi}\sum_{k=1}^{n-l-1}p^{k-1}q L_{n-k-l}(\uX_{0,n-l-1})\right)L_{l+1}(\uX_{n-l-1,n})\\
&&+ \bar{\pi}\left(\sum_{k=0}^{l}p^{n-k-1}q L_{k+1}(\uX_{0,n}) + p^{n}L_0(\uX_{0,n})\right)\nonumber\\
\end{eqnarray*}
From other side we have
\begin{eqnarray*}
S(\uX_{0,n})&=&\pi L_n(\uX_{0,n})+\bar{\pi}\left(\sum_{k=1}^{n}p^{k-1}q L_{k}(\uX_{0,n})+p^{n}L_0(\uX_{0,n})\right)\nonumber\\
&=&\pi L_n(\uX_{0,n})+\bar{\pi}\left(\sum_{k=1}^{n-l-1}p^{k-1}q L_{n-k+1}(\uX_{0,n})+ \sum_{k=n-l}^{n}p^{k-1}q L_{n-k+1}(\uX_{0,n}) + p^{n}L_0(\uX_{0,n})\right).\nonumber
\end{eqnarray*}
\normalsize}
This establishes the formula (\ref{FaktoryzacjaGestosci}).
\end{proof}

\begin{lemma}
\label{PojRozregCiagowMark-dodatek-lemat-pin d-1 krokow wstecz}
For $n >l \geq 0$ the following equation is satisfied:
\setlength\arraycolsep{0pt}
  \begin{eqnarray*}
\bP_{x}(\theta \leq n-l-1 \mid \cF_n ) &=& \frac{\Pi_{n-l-1}L_{l+1}(\underline{X}_{n-l-1,n})}{ G(\uX_{n-l-1,n},\Pi_{n-l-1})}.
  \end{eqnarray*}
\end{lemma}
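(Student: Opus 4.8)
The goal is to express $\bP_{x}(\theta \leq n-l-1 \mid \cF_n)$ in terms of the posterior value $\Pi_{n-l-1}$ and the likelihood factors on the block $\uX_{n-l-1,n}$. The plan is to proceed exactly as in the proof of the previous lemma (the formula for $1-\Pi_n$): pick a rectangular event $\underline{D}_{0,n} = \{\omega : \uX_{0,n} \in \underline{A}_{0,n},\ A_i \in \cB\}$, compute $\int_{\underline{D}_{0,n}} \one_{\{\theta \leq n-l-1\}}\, d\bP_x$ two ways, and then invoke the definition of conditional expectation. For the left-hand side I would write $\int_{\underline{D}_{0,n}} \one_{\{\theta \leq n-l-1\}}\, d\bP_x = \int_{\underline{A}_{0,n}} \bP_x(\theta \leq n-l-1, \uX_{0,n}\in d\ux_{0,n})$, and identify the integrand as the portion of the joint density $S(\ux_{0,n})$ coming from those summands with $\theta = k$, $k \leq n-l-1$. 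Reading off \eqref{gestoscLaczna}, that portion is exactly $\bar\pi\sum_{k=1}^{n-l-1} p^{k-1} q\, L_{n-k+1}(\ux_{0,n}) + \pi L_n(\ux_{0,n})$ restricted to indices with $\theta\le n-l-1$; since for $k \le n-l-1$ the switch occurs within the first $n-l-1$ coordinates, each such term factors as $\big(\text{(density term on }\ux_{0,n-l-1})\big)\cdot L_{l+1}(\ux_{n-l-1,n})$.

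The key algebraic step is the factorization. I would group the "$\theta$ small" part of $S(\ux_{0,n})$ and pull out the common factor $L_{l+1}(\ux_{n-l-1,n})$; the remaining factor is precisely $\pi L_{n-l-1}(\ux_{0,n-l-1}) + \bar\pi\sum_{k=1}^{n-l-1} p^{k-1} q\, L_{n-l-k}(\ux_{0,n-l-1})$, which by \eqref{gestoscLaczna} and \eqref{PIn} equals $S(\ux_{0,n-l-1})\Pi_{n-l-1} = S(\ux_{0,n-l-1}) - \bar\pi p^{n-l-1} L_0(\ux_{0,n-l-1})$. Hence
\[
\int_{\underline{D}_{0,n}} \one_{\{\theta \leq n-l-1\}}\, d\bP_x = \int_{\underline{A}_{0,n}} S(\ux_{0,n-l-1})\,\Pi_{n-l-1}\, L_{l+1}(\ux_{n-l-1,n})\, \mu(d\ux_{0,n}).
\]
Then I would rewrite the integrand using the density factorization \eqref{FaktoryzacjaGestosci}, $S(\ux_{0,n}) = S(\ux_{0,n-l-1}) G(\ux_{n-l-1,n},\Pi_{n-l-1})$, to get
\[
\int_{\underline{D}_{0,n}} \one_{\{\theta \leq n-l-1\}}\, d\bP_x = \int_{\underline{A}_{0,n}} \frac{\Pi_{n-l-1} L_{l+1}(\uX_{n-l-1,n})}{G(\uX_{n-l-1,n},\Pi_{n-l-1})}\, S(\ux_{0,n})\, \mu(d\ux_{0,n}) = \int_{\underline{D}_{0,n}} \frac{\Pi_{n-l-1} L_{l+1}(\uX_{n-l-1,n})}{G(\uX_{n-l-1,n},\Pi_{n-l-1})}\, d\bP_x.
\]

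Since $\Pi_{n-l-1}$ is $\cF_{n-l-1}\subseteq\cF_n$-measurable and the right-hand integrand is $\cF_n$-measurable, and since rectangular events $\underline{D}_{0,n}$ generate $\cF_n$, the definition of conditional expectation yields the claimed identity. The only real subtlety is bookkeeping: making sure that the summands of $S$ with $\theta = k$ for $k \le n-l-1$ do factor cleanly through coordinate $n-l-1$ and that the leftover factor is recognized as $S(\ux_{0,n-l-1})\Pi_{n-l-1}$ rather than something off by a $p$-power — this is where I would be most careful, checking the edge case $l = 0$ and the boundary term $k = n-l-1$ against \eqref{funkcjaG} and \eqref{PIn}. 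Everything else is routine.
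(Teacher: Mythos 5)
Your proof is correct and follows essentially the same route as the paper: a rectangular generating event, identification of the relevant summands of the joint density $S(\ux_{0,n})$, the factorization $S(\ux_{0,n})=S(\ux_{0,n-l-1})\,G(\uX_{n-l-1,n},\Pi_{n-l-1})$, and the formula for $1-\Pi_{n-l-1}$. The only (harmless) difference is that you integrate $\one_{\{\theta\le n-l-1\}}$ directly and read off the numerator $S(\ux_{0,n-l-1})\Pi_{n-l-1}L_{l+1}(\ux_{n-l-1,n})$, whereas the paper integrates $\one_{\{\theta> n-l-1\}}$ and passes to the complement at the very end.
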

\begin{proof}
Let $\underline{D}_{0,n} = \{ \omega: \uX_{0,n} \in \underline{A}_{0,n}, A_i \in \cB\}$. Then
\setlength\arraycolsep{0pt}
\begin{eqnarray*}
\bP_{x}\!\!\!&(&\!\!\!\underline{D}_{0,n})\bP_{x}(\theta>n-l-1|\underline{D}_{0,n})=\int_{\underline{D}_{0,n}}\one_{\{\theta > n-l-1\}}d\bP_x =\int_{\underline{D}_{0,n}}\bP_x(\theta > n-l-1 | \cF_n)d\bP_x \nonumber \\
    &=& \int_{\underline{A}_{0,n}}\frac{\sum_{k=n-l}^{n}\bP_x(\theta = k)L_{n-k+1}(\ux_{0,n}) + \bP_x(\theta > n)L_0(\ux_{0,n})}{S(\ux_{0,n})}S(\ux_{0,n})\mu(d\ux_{0,n})\nonumber \\
    &=& \int_{\underline{A}_{0,n}}\frac{\bar{\pi}p^{n-l-1} L_0(\ux_{0,n-l-1}) \left(\sum_{k=0}^{l}p^{l-k}qL_{k+1}(\ux_{n-l-1,n})+ p^{l+1}L_{0}(\ux_{n-l-1,n})\right)}{S(\ux_{0,n})} \nonumber\\
    && \times S(\ux_{0,n})\mu(d\ux_{0,n})\nonumber \\
    &=& \int_{\underline{D}_{0,n}}\frac{\bar{\pi}p^{n-l-1} L_0(\ux_{0,n-l-1}) \left(\sum_{k=0}^{l}p^{l-k}qL_{k+1}(\uX_{n-l-1,n})+ p^{l+1}L_{0}(\uX_{n-l-1,n})\right)}{S(\uX_{0,n})}d\bP_x \nonumber \\
    &\stackrel{(\ref{FaktoryzacjaGestosci})}{=}& \int_{\underline{D}_{0,n}}\frac{\bar{\pi}p^{n-l-1} L_0(\ux_{0,n-l-1}) \left(\sum_{k=0}^{l}p^{l-k}qL_{k+1}(\uX_{n-l-1,n})+  p^{l+1}L_{0}(\uX_{n-l-1,n})\right)}{S(\uX_{0,n-l-1})G(\uX_{n-l-1,n},\Pi_{n-l-1})}d\bP_x \nonumber \\
    &\stackrel{(\ref{PIn})}{=}& \int_{\underline{D}_{0,n}}(1-\Pi_{n-l-1})\frac{\sum_{k=0}^{l}p^{l-k}qL_{k+1}(\uX_{n-l-1,n})+ p^{l+1}L_{0}(\uX_{n-l-1,n})}{G(\uX_{n-l-1,n},\Pi_{n-l-1})}d\bP_x \nonumber
\end{eqnarray*}
This implies that:
\setlength\arraycolsep{0pt}
\begin{eqnarray}
\label{wzor2}
\bP_{x}(\theta > n-l-1 | \cF_n)  &=& (1-\Pi_{n-l-1})\\
\nonumber &&\mbox{}\times\frac{\sum_{k=0}^{l}p^{l-k}qL_{k+1}(\uX_{n-l-1,n})+ p^{l+1}L_{0}(\uX_{n-l-1,n})}{G(\uX_{n-l-1,n},\Pi_{n-l-1})}
\end{eqnarray}
Simple transformations of (\ref{wzor2}) lead to the thesis.
\end{proof}

\begin{lemma}
\label{PojRozregCiagowMark-dodatek-lemat-pin jako funkcja pin-d-1}
For $n >l \geq 0$ the recursive equation holds:
\small
\setlength\arraycolsep{0pt}
 \begin{eqnarray}
\label{PojRozregCiagowMark-Pi_n_JakoFunkcja_Pi_n_minus_d_minus_1}
\Pi_n &=&\frac{\Pi_{n-l-1} L_{l+1}(\underline{X}_{n-l-1,n}) + (1-\Pi_{n-l-1})q\sum_{k=0}^l p^{l-k}L_{k+1}(\underline{X}_{n-l-1,n})}{ G(\uX_{n-l-1,n},\Pi_{n-l-1})}
\end{eqnarray}
\end{lemma}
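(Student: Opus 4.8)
The plan is to obtain the recursion as an immediate consequence of the density factorization~(\ref{FaktoryzacjaGestosci}) together with the one‑step identity~(\ref{PIn}), rather than redoing a full conditioning argument. First I would write $\Pi_n=1-\bP_x(\theta>n\mid\cF_n)$ and use~(\ref{PIn}) at time $n$ to get
\[
\bP_x(\theta>n\mid\cF_n)=\frac{\bar{\pi}\,p^{n}L_0(\uX_{0,n})}{S(\uX_{0,n})}.
\]
Since $L_0$ on any block of coordinates is just the product of the type‑$0$ transition densities over that block, $L_0(\uX_{0,n})=L_0(\uX_{0,n-l-1})\,L_0(\uX_{n-l-1,n})$; combining this with~(\ref{FaktoryzacjaGestosci}), i.e. $S(\uX_{0,n})=S(\uX_{0,n-l-1})\,G(\uX_{n-l-1,n},\Pi_{n-l-1})$, and splitting $p^{n}=p^{n-l-1}p^{l+1}$, we obtain
\[
\bP_x(\theta>n\mid\cF_n)=\frac{\bar{\pi}\,p^{n-l-1}L_0(\uX_{0,n-l-1})}{S(\uX_{0,n-l-1})}\cdot\frac{p^{l+1}L_0(\uX_{n-l-1,n})}{G(\uX_{n-l-1,n},\Pi_{n-l-1})}=(1-\Pi_{n-l-1})\,\frac{p^{l+1}L_0(\uX_{n-l-1,n})}{G(\uX_{n-l-1,n},\Pi_{n-l-1})},
\]
where the last step applies~(\ref{PIn}) once more, now at time $n-l-1$.

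It then remains only to substitute this into $\Pi_n=1-\bP_x(\theta>n\mid\cF_n)$ and unfold the definition~(\ref{funkcjaG}) of $G$ with $\alpha=\Pi_{n-l-1}$, namely
\[
G(\uX_{n-l-1,n},\Pi_{n-l-1})=\Pi_{n-l-1}L_{l+1}(\uX_{n-l-1,n})+(1-\Pi_{n-l-1})\Bigl(q\textstyle\sum_{k=0}^{l}p^{l-k}L_{k+1}(\uX_{n-l-1,n})+p^{l+1}L_0(\uX_{n-l-1,n})\Bigr).
\]
Writing $1=G(\uX_{n-l-1,n},\Pi_{n-l-1})/G(\uX_{n-l-1,n},\Pi_{n-l-1})$ and subtracting, the summand $(1-\Pi_{n-l-1})p^{l+1}L_0(\uX_{n-l-1,n})$ cancels, and the surviving numerator is exactly $\Pi_{n-l-1}L_{l+1}(\uX_{n-l-1,n})+(1-\Pi_{n-l-1})q\sum_{k=0}^{l}p^{l-k}L_{k+1}(\uX_{n-l-1,n})$, which is~(\ref{PojRozregCiagowMark-Pi_n_JakoFunkcja_Pi_n_minus_d_minus_1}).

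A slightly longer alternative is to write $\Pi_n=\bP_x(\theta\le n-l-1\mid\cF_n)+\bP_x(n-l-1<\theta\le n\mid\cF_n)$, take the first term from Lemma~\ref{PojRozregCiagowMark-dodatek-lemat-pin d-1 krokow wstecz}, and for the second term repeat the generating‑sets argument of that proof: for $\underline{D}_{0,n}=\{\uX_{0,n}\in\underline{A}_{0,n}\}$ one computes $\bP_x(\underline{D}_{0,n}\cap\{n-l\le\theta\le n\})=\sum_{k=n-l}^{n}\bP_x(\theta=k)\int_{\underline{A}_{0,n}}L_{n-k+1}(\ux_{0,n})\mu(d\ux_{0,n})$, factors $L_{n-k+1}(\ux_{0,n})=L_0(\ux_{0,n-l-1})L_{n-k+1}(\ux_{n-l-1,n})$ (legitimate since $k\ge n-l$), inserts $\bP_x(\theta=k)=\bar{\pi}qp^{k-1}$, reindexes by $i=n-k$ so the geometric weights reassemble as $p^{n-l-1}q\sum_{i=0}^{l}p^{l-i}$, and then uses~(\ref{PIn}) and~(\ref{FaktoryzacjaGestosci}) exactly as above before invoking the definition of conditional expectation.

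I do not expect a genuine obstacle: essentially all the substantive work — the mixture over the values of $\theta$ and the Markovian likelihoods — has already been front‑loaded into the proof of~(\ref{FaktoryzacjaGestosci}), so the present lemma is close to a corollary of it. The only point needing a little care is the exponent bookkeeping: that $p^{n}$ (resp. $p^{k-1}$ in the alternative route) splits off a factor $p^{n-l-1}$ belonging to the ``$n-l-1$ steps ago'' block, leaving precisely the $p^{l+1}$ (resp. $p^{l-i}$) pattern appearing inside $G$ in~(\ref{funkcjaG}); this is where the hypothesis $n>l$ enters, ensuring $n-l-1\ge 0$ so that the factorizations of $L_0$ and $S$ make sense.
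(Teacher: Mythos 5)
Your main argument is correct and is essentially the paper's own proof: the paper likewise forms the ratio $(1-\Pi_n)/(1-\Pi_{n-l-1})$ from (\ref{PIn}) at times $n$ and $n-l-1$, reduces it to $p^{l+1}L_0(\uX_{n-l-1,n})/G(\uX_{n-l-1,n},\Pi_{n-l-1})$ via (\ref{FaktoryzacjaGestosci}) and the factorization of $L_0$, and then cancels the $(1-\Pi_{n-l-1})p^{l+1}L_0$ term against the corresponding summand of $G$. The exponent bookkeeping and the role of $n>l$ are exactly as you describe, so no changes are needed.
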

\begin{proof}
With the aid of (\ref{PIn}) we get:
\begin{eqnarray*}
\frac{1-\Pi_n}{1-\Pi_{n-l-1}} &=& \frac{p^{n}L_0(\underline{X}_{0,n})}{S(\uX_{0,n})}\frac{S(\uX_{0,{n-l-1}})}{p^{n-l-1}L_0(\underline{X}_{0,{n-l-1}})} = \frac{p^{l+1}L_{0}(\uX_{n-l-1,n})}{G(\uX_{n-l-1,n},\Pi_{n-l-1})} \nonumber
\end{eqnarray*}
Hence
\begin{eqnarray*}
\Pi_n &=& \frac{G(\uX_{n-l-1,n},\Pi_{n-l-1}) - p^{n-l-1}L_0(\underline{X}_{0,{n-l-1}})(1-\Pi_{n-l-1})}{G(\uX_{n-l-1,n},\Pi_{n-l-1})} \nonumber\\
    &=& \frac{\Pi_{n-l-1} L_{l+1}(\underline{X}_{n-l-1,n}) + (1-\Pi_{n-l-1})q\sum_{k=0}^l p^{l-k}L_{k+1}(\underline{X}_{n-l-1,n})}{ G(\uX_{n-l-1,n},\Pi_{n-l-1})}.\nonumber
\end{eqnarray*}
\vspace{-3ex}
This establishes the formula (\ref{PojRozregCiagowMark-Pi_n_JakoFunkcja_Pi_n_minus_d_minus_1}).
\end{proof}

\section*{Final remarks}
The presented analysis of the problem when the acceptable error for stopping before or after the disorder allows to see which protection is more difficult to control. When we admit that it is possible sequence of observation without disorder it is interesting question how to detect not only that we observe the second kind of data but that there were no at all data of the first kind. It can be verified by standard testing procedure when we stop very early ($\tau\leq \min\{d_1,d_2\}$).



\begin{thebibliography}{10}
\providecommand{\url}[1]{#1}
\csname url@samestyle\endcsname
\providecommand{\newblock}{\relax}
\providecommand{\bibinfo}[2]{#2}
\providecommand{\BIBentrySTDinterwordspacing}{\spaceskip=0pt\relax}
\providecommand{\BIBentryALTinterwordstretchfactor}{4}
\providecommand{\BIBentryALTinterwordspacing}{\spaceskip=\fontdimen2\font plus
\BIBentryALTinterwordstretchfactor\fontdimen3\font minus
  \fontdimen4\font\relax}
\providecommand{\BIBforeignlanguage}[2]{{%
\expandafter\ifx\csname l@#1\endcsname\relax
\typeout{** WARNING: IEEEtranS.bst: No hyphenation pattern has been}%
\typeout{** loaded for the language `#1'. Using the pattern for}%
\typeout{** the default language instead.}%
\else
\language=\csname l@#1\endcsname
\fi
#2}}
\providecommand{\BIBdecl}{\relax}
\BIBdecl

\bibitem{bar04:epidemio}
M.~Baron, ``{Early detection of epidemics as a sequential change-point
  problem.}'' in \emph{Longevity, aging and degradation models in reliability,
  public health, medicine and biology, LAD 2004. Selected papers from the first
  French-Russian conference, St. Petersburg, Russia, June 7--9, 2004}, ser. IMS
  Lecture Notes-Monograph Series, V.~Antonov, C.~Huber, M.~Nikulin, and
  V.~Polischook, Eds., vol.~2.\hskip 1em plus 0.5em minus 0.4em\relax St.
  Petersburg, Russia: St. Petersburg State Politechnical University, 2004, pp.
  31--43.

\bibitem{basben86:abrupt}
M.~Basseville and A.~Benveniste, Eds., \emph{Detection of abrupt changes in
  signals and dynamical systems}, ser. Lecture Notes in Control and Information
  Sciences.\hskip 1em plus 0.5em minus 0.4em\relax Berlin: Springer-Verlag,
  1986, vol.~77, p. 373.

\bibitem{boj79:dis}
T.~Bojdecki, ``Probability maximizing approach to optimal stopping and its
  application to a disorder problem,'' \emph{Stochastics}, vol.~3, pp. 61--71,
  1979.

\bibitem{bojhos84:problem}
T.~Bojdecki and J.~Hosza, ``On a generalized disorder problem,''
  \emph{Stochastic Processes Appl.}, vol.~18, pp. 349--359, 1984.

\bibitem{brodar93:nonparametr}
B.~Brodsky and B.~Darkhovsky, \emph{{N}onparametric {M}ethods in
  {C}hange-{P}oint {P}roblems}.\hskip 1em plus 0.5em minus 0.4em\relax
  Dordrecht: Mathematics and its Applications (Dordrecht). 243. Dordrecht:
  Kluwer Academic Publishers. 224 p., 1993.

\bibitem{dubmaz01:quickest}
P.~Dube and R.~Mazumdar, ``A framework for quickest detection of traffic
  anomalies in networks,'' Electrical and Computer Engineering, Purdue
  University, Tech. Rep., November 2001,
  \href{http://citeseer.ist.psu.edu/506551.html}{citeseer.ist.psu.edu/506551.h%
tml}.

\bibitem{mou98:abrupt}
G.~Moustakides, ``\BIBforeignlanguage{English}{{Quickest detection of abrupt
  changes for a class of random processes.}}''
  \emph{\BIBforeignlanguage{English}{IEEE Trans. Inf. Theory}}, vol.~44, no.~5,
  pp. 1965--1968, 1998.

\bibitem{ochsza13:random}
A.~Ochman-Gozdek and K.~Szajowski, ``{Detection of a random sequence of
  disorders.}'' in \emph{Proceedings $59^{th}$ \emph{ISI World Statistics
  Congress 25--30 August 2013, Hong Kong Special Administrative Region,
  China}}, X.~He, Ed., vol. CPS018.\hskip 1em plus 0.5em minus 0.4em\relax The
  Hague, The Netherlands: Published by the International Statistical Institute,
  2013, pp. 3795--3800,
  \href{http://2013.isiproceedings.org/Files/CPS018-P8-S.pdf}{http://2013.isip%
roceedings.org/Files/CPS018-P8-S.pdf}.

\bibitem{SarSza11:transition}
W.~Sarnowski and K.~Szajowski, ``{Optimal detection of transition probability
  change in random sequence.}'' \emph{Stochastics}, vol.~83, no. 4-6, pp.
  569--581, 2011.

\bibitem{she31:quality}
W.~Shewhart, \emph{{Economic control of quality of manufactured
  products}}.\hskip 1em plus 0.5em minus 0.4em\relax Yew York: {D. Van
  Nostrand}, 1931.

\bibitem{shi61:detection}
A.~Shiryaev, ``{The detection of spontaneous effects},'' \emph{Sov. Math,
  Dokl.}, vol.~2, pp. 740--743, 1961, translation from Dokl. Akad. Nauk SSSR
  138, 799-801 (1961).

\bibitem{shi78:optimal}
------, \emph{Optimal Stopping Rules}.\hskip 1em plus 0.5em minus 0.4em\relax
  New York, Heidelberg, Berlin: Springer-Verlag, 1978.

\bibitem{Sie13:biology}
\BIBentryALTinterwordspacing
D.~Siegmund, ``Change-points: from sequential detection to biology and back,''
  \emph{Sequential Anal.}, vol.~32, no.~1, pp. 2--14, 2013. [Online].
  Available: \url{http://dx.doi.org/10.1080/07474946.2013.751834}
\BIBentrySTDinterwordspacing

\bibitem{sza92:detection}
K.~Szajowski, ``{O}ptimal on-line detection of outside observation,''
  \emph{J.Stat. Planning and Inference}, vol.~30, pp. 413--426, 1992.

\bibitem{sza96:twodis}
------, ``{A} two-disorder detection problem,'' \emph{Appl. Math.}, vol.~24,
  no.~2, pp. 231--241, 1996.

\bibitem{tarroz06:intrusions}
A.~G. Tartakovsky, B.~L. Rozovskii, R.~B. Bla{\v{z}}ek, and H.~Kim, ``Detection
  of intrusions in information systems by sequential change-point methods,''
  \emph{Stat. Methodol.}, vol.~3, no.~3, pp. 252--293, 2006.

\bibitem{yak94:finite}
B.~Yakir, ``{Optimal detection of a change in distribution when the
  observations form a Markov chain with a finite state space.}'' in
  \emph{Change-point Problems. Papers from the AMS-IMS-SIAM Summer Research
  Conference held at Mt. Holyoke College, South Hadley, MA, USA, July 11--16,
  1992}, ser. IMS Lecture Notes-Monograph Series, E.~Carlstein, H.-G.
  M\"{u}ller, and D.~Siegmund, Eds., vol.~23.\hskip 1em plus 0.5em minus
  0.4em\relax Hayward, California: Institute of Mathematical Statistics, 1994,
  pp. 346--358.

\bibitem{yos83:complicated}
M.~Yoshida, ``{P}robability maximizing approach for a quickest detection
  problem with complicated {M}arkov chain,'' \emph{J. Inform. Optimization
  Sci.}, vol.~4, pp. 127--145, 1983.

\end{thebibliography}
\def\cprime{$'$}

\end{document}